\documentclass[11 pt]{amsart}

\usepackage{amsmath,amsthm,amssymb,amscd}

\evensidemargin 0.0in \oddsidemargin 0.0in \textwidth 6.5in
\topmargin  -0.2in \textheight  9.0in \overfullrule = 0pt

\newtheorem{thm}{Theorem}[section]
\newtheorem{cor}[thm]{Corollary}
\newtheorem{lem}[thm]{Lemma}
\newtheorem{ass}[thm]{Assumption}

\newtheorem{prop}[thm]{Proposition}

\theoremstyle{definition}
\newtheorem{df}[thm]{Definition}
\theoremstyle{remark}
\newtheorem{rem}[thm]{Remark}
\newtheorem{ex}{Example}
\numberwithin{equation}{section}

\def\be#1 {\begin{equation} \label{#1}}
\newcommand{\ee}{\end{equation}}

\newcommand{\mb}{\medskip\noindent}

\newcommand{\R}{\mathbb R}

\newcommand{\N}{\mathbb N}
\newcommand{\Z}{\mathbb Z}

\newcommand{\BMO}{\textrm{BMO}}

\def \vsp {\vspace{6pt}}

\DeclareMathOperator{\supp}{supp}
\DeclareMathOperator*{\esssup}{ess\ sup}
\newcommand{\eps}{\ensuremath{\varepsilon}}
\newcommand{\abs}[1]{\left|#1\right|}
\newcommand{\norm}[1]{\left\|#1\right\|}

\def\Xint#1{\mathchoice
   {\XXint\displaystyle\textstyle{#1}}%
   {\XXint\textstyle\scriptstyle{#1}}%
   {\XXint\scriptstyle\scriptscriptstyle{#1}}%
   {\XXint\scriptscriptstyle\scriptscriptstyle{#1}}%
   \!\int}
\def\XXint#1#2#3{{\setbox0=\hbox{$#1{#2#3}{\int}$}
     \vcenter{\hbox{$#2#3$}}\kern-.5\wd0}}

\def\aver#1{\Xint-_{#1}}

\begin{document}

\title{Pseudodifferential operators associated with a semigroup of operators.}

\day=10 \month=12 \year=2012

\date{\today}

\author{Fr\'ed\'eric Bernicot}
\address{Fr\'ed\'eric Bernicot - CNRS - Universit\'e de Nantes \\ Laboratoire Jean Leray \\ 2, rue de la Houssini\`ere \\
44322 Nantes cedex 3, France }
\email{frederic.bernicot@univ-nantes.fr}

\author{Dorothee Frey}
\address{Dorothee Frey - Mathematical Sciences Institute, John Dedman Building, Australian National University, Canberra ACT 0200, Australia}
\email{dorothee.frey@anu.edu.au}

\subjclass[2000]{Primary 35S05 ; 38B10}

\keywords{Pseudodifferential operators ; metric measure space ; Heat semigroup}

\thanks{The first author is supported by the ANR under the project AFoMEN no. 2011-JS01-001-01. 
The second author is supported by the Australian Research Council Discovery grants DP110102488 and DP120103692.}

\date{\today}

\begin{abstract} Related to a semigroup of operators on a metric measure space, we define and study pseudodifferential operators (including the setting of Riemannian manifold, fractals, graphs ...). Boundedness on $L^p$ for pseudodifferential operators of order $0$ are proved. Mainly, we focus on symbols belonging to the class $S^0_{1,\delta}$ for $\delta\in[0,1)$. For the limit class $S^0_{1,1}$, we describe some results by restricting our attention to the case of a sub-Laplacian operator on a Riemannian manifold.
\end{abstract}

\maketitle

\begin{quote}
\footnotesize\tableofcontents
\end{quote}

In this paper, we define and study pseudodifferential operators on metric measure spaces endowed with a non-negative, self-adjoint operator, which generates a semigroup satisfying certain off-diagonal estimates. 

Pseudodifferential operators are now well-known in the Euclidean setting and have been powerful tools in several situations: propagation of regularity for nonlinear PDEs with the paralinearization, microlocal analysis, ... \\
For operators related to the H\"ormander symbolic classes $S^m_{\rho,\delta}$, the theory is based on 
\begin{itemize}
 \item Boundedness of the corresponding pseudodifferential operators on Lebesgue and Sobolev spaces;
 \item A functional calculus for associated operators with symbolic calculus, which can be described by very convenient ``rules'' on the principal part of the symbols.
\end{itemize}
Throughout this theory, very well developed in the Euclidean setting, the Fourier transform is a crucial tool to study and define the symbolic classes. The theory can be extended to some geometric groups, where a kind of Fourier transform exists (see \cite{BFG} for Heisenberg groups, \cite{GS} for H-type Carnot groups, ...).

Since this theory is now well-known and many works have contributed to the development, we do not detail this situation. We just emphasize that pseudodifferential operators of order $0$ associated with a symbol $\sigma \in S^0_{1,\delta}$ always have a kernel representation with a kernel satisfying the standard properties. Thus, 
\begin{itemize}
 \item If $\delta<1$, then $\sigma(x,D)$ is a Calder\'on-Zygmund operator, and is $L^p$-bounded for every $p\in (1,\infty)$;
 \item If $\delta=1$, then $\sigma(x,D)$ is a Calder\'on-Zygmund operator, and, according to the $T(1)$-theorem, $L^p$-bounded for every $p\in (1,\infty)$ if and only if $[\sigma(x,D)]^*(1) \in \BMO$.
\end{itemize}
The classical class $S^m_{1,0}$ (and more generally $S^m_{1,\delta}$ for $\delta\in(0,1)$) satisfies very nice symbolic rules for some pseudodifferential calculus. However, the limit class $S^m_{1,1}$(which can be seen more exotic) is very important, since it naturally appears in the context of paraproducts (and therefore in the paralinearization argument developed by Bony in \cite{Bony}).

After that, people were naturally interested in extending this theory for manifolds. This was done, on smooth manifolds since there, where one can locally use charts to reduce the problem to the one in the Euclidean space. \\
More recently, several works are concerned with defining and studying pseudodifferential operators in a context where no such reduction hold: Riemannian manifolds, graphs, fractal sets etc. See e.g. \cite{IRS}.

Our aim in this paper is to describe a very weak structure, which allows us to define a suitable pseudodifferential calculus. We consider a space of homogeneous type $X$, and assume that this space is equipped with a Sobolev embedding and a Poincar\'e inequality. That is, we assume that there is a non-negative, self-adjoint operator $\Delta$, densely defined on $L^2(X)$, that satisfies a Sobolev embedding of the form \eqref{eq:sobolev}. 
We moreover assume that there is another operator $L$, that is non-negative, self-adjoint on $L^2(X)$, and satisfies $L^p$ off-diagonal estimates of the form (\ref{ass-Lq-offidag}) for $p$ in some interval $(p_0,p_0')$.
Let us emphasize, that we neither assume kernel estimates on the semigroup $(e^{-t\Delta})_{t>0}$ nor on $(e^{-tL})_{t>0}$, but instead work with the more general concept of Davies-Gaffney estimates and $L^p$ off-diagonal estimates, also called generalized Gaussian estimates. We discuss possible examples of operators $\Delta$ and $L$ in Section 2. \\
Then, we define a version of H\"ormander class $S^0_{1,\delta}$ for $\delta\in[0,1]$ associated with $L$. We investigate symbols, that are functions $\sigma :X \times \R \to \R$, sufficiently smooth (where the smoothness in ``$x$'' is measured in term of $\Delta$), and define via functional calculus (see Subsubsection \ref{subsub}) a corresponding pseudodifferential operator $T_\sigma:=\sigma(\,.\,,L)$ as 
\begin{equation} \label{def-psdo}
 T_\sigma(f) := x \mapsto \sigma(x,L)[f](x).
\end{equation}
We then aim to study $L^p$-boundedness of these operators. Since in our setting, the operator $\sigma(x,L)$ is in general not a Calder\'on-Zygmund operator, we have to adapt the methods used in the study of classical pseudodifferential operators. 
We first study $L^p$-boundedness for the classes $S^0_{1,\delta}$ with $\delta \in [0,1)$. Here, we combine the classical method of freezing coefficients and decomposition of symbols into elementary symbols \cite{CM}, and combine these with recent results on spectral multipliers for the operator $L$. We then obtain

\begin{thm} Assume that the ambiant space satisfies a Sobolev inequality (Assumption \ref{ass:sobolev}) and a symbol $\sigma\in S^0_{1,\delta}$ for $\delta\in[0,1)$. The pseudodifferential operator $\sigma(x,L)$ is bounded on $L^p$ for every $p\in(p_0,p_0')$ in the two following situations: if $\delta=0$, or if $\delta \in(0,1)$ with an extra weak Poincar\'e inequality (Assummption \ref{ass:poincare}).
\end{thm}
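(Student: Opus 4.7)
The plan is to reduce boundedness of $\sigma(x,L)$ to spectral multiplier estimates for $L$ by decomposing $\sigma$ into elementary symbols, in the spirit of the freezing-coefficient method of Coifman--Meyer. First, I would perform a dyadic Littlewood--Paley resolution on the spectral side: choose $\psi\in C_c^\infty((1/2,2))$ with $\sum_{j\in\Z}\psi(2^{-j}\lambda)=1$ on $(0,\infty)$, and set $\sigma_j(x,\lambda):=\sigma(x,\lambda)\psi(2^{-j}\lambda)$. Then $\sigma_j(x,L)f$ is spectrally localized at scale $2^j$ for $L$, and the symbol estimates give $|\partial_\lambda^n\sigma_j(x,\lambda)|\lesssim 2^{-jn}$ together with $\Delta$-regularity in $x$ controlled at scale $\sim 2^{j\delta}$.

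Second, I would decompose the $x$-variable via a Littlewood--Paley resolution associated with $\Delta$,
\[
 \sigma_j(x,\lambda)=\sum_{k\geq 0}\bigl[\varphi_k(\Delta)\sigma_j(\cdot,\lambda)\bigr](x),
\]
where $\{\varphi_k\}$ is a dyadic partition in the spectrum of $\Delta$. The symbol condition $\sigma\in S^0_{1,\delta}$ combined with the Sobolev embedding (Assumption~\ref{ass:sobolev}) yields
\[
 \bigl\|\varphi_k(\Delta)\sigma_j(\cdot,\lambda)\bigr\|_{L^\infty_x}\lesssim 2^{-N(k-j\delta)_+}
\]
for every $N\in\N$. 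This produces an elementary-symbol expansion $\sigma(x,L)f(x)=\sum_{j,k}A_{j,k}(x)\,m_{j,k}(L)f(x)$, with $A_{j,k}\in L^\infty$ enjoying rapid decay in $(k-j\delta)_+$, and $m_{j,k}$ spectrally supported in $\lambda\sim 2^j$ with uniformly bounded scale-invariant $C^N$-norms.

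Third, I would apply the spectral multiplier theorem for $L$, which is available on $L^p$ for every $p\in(p_0,p_0')$ as a consequence of the generalized Gaussian estimates~(\ref{ass-Lq-offidag}), to bound each $m_{j,k}(L)$ uniformly in $j,k$. Multiplying by the $L^\infty$-bounded $A_{j,k}$, summing in $k$ using the decay, and reassembling the sum over $j$ via the Littlewood--Paley square-function equivalence of $L^p$-norms adapted to $L$ (again a consequence of the off-diagonal bounds), closes the argument when $\delta=0$.

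The main obstacle is the range $\delta\in(0,1)$, in which the $x$-regularity scale $\sim 2^{-j\delta}$ measured by $\Delta$ depends on the spectral scale of $L$, so that the multiplication factors $A_{j,k}$ oscillate at a mesoscopic scale and do not commute harmlessly with $m_{j,k}(L)$. Controlling the frozen-coefficient remainder on balls $B$ of radius $\sim 2^{-j\delta}$ requires identifying the intrinsic smoothness provided by $\Delta$ with a genuine H\"older-type smoothness at the metric scale, and this is precisely the role of the weak Poincar\'e inequality (Assumption~\ref{ass:poincare}): it links the $\Delta$-scale to the metric scale, making the kernel support of the spectral projection $\varphi_k(\Delta)$ quantitatively usable against the off-diagonal decay of $m_{j,k}(L)$. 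Once this link is established, the argument collapses to the structure of the $\delta=0$ case, uniformly in $j$, and the same square-function reassembly yields $L^p$-boundedness for $p\in(p_0,p_0')$.
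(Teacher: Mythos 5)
Your overall philosophy---reduce to elementary symbols, freeze coefficients, invoke spectral multiplier bounds for $L$, and use the weak Poincar\'e inequality to convert the $\Delta$-regularity of the coefficients into metric regularity---is the same as the paper's. But two steps, as written, do not go through. First, the double Littlewood--Paley decomposition (in $\lambda$ via $\psi(2^{-j}\lambda)$ and in $x$ via $\varphi_k(\Delta)$) does \emph{not} produce the claimed tensor form $\sum_{j,k}A_{j,k}(x)\,m_{j,k}(L)$: each block $\bigl[\varphi_k(\Delta)\sigma_j(\cdot,\lambda)\bigr](x)$ still depends jointly on $x$ and $\lambda$, so the variables remain entangled. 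Achieving the separation of variables is the entire content of the reduction to elementary symbols; the paper does it (Lemma \ref{lem:decom}) by expanding $\xi\mapsto\sigma(x,\xi)\psi(t\xi)$ in a Fourier series on its support, which yields $\sum_l\gamma_{l,t}(x)e^{2i\pi lt\xi}$ with coefficients decaying rapidly in $l$. No spectral decomposition in the $x$-variable is used (and uniform $L^\infty$-boundedness of $\varphi_k(\Delta)$ would itself need justification under the stated hypotheses on $\Delta$).

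Second, and more seriously, the concluding step ``reassembling the sum over $j$ via the Littlewood--Paley square-function equivalence'' is not a valid argument: once multiplied by $A_j(x)$, the pieces $A_j(x)\,m_j(L)f(x)$ are no longer spectrally localized in $L$, so the reverse square-function inequality cannot be applied to them. If this step were legitimate, it would prove $L^p$-boundedness of every paraproduct $\sum_jA_jQ_jf$ with merely bounded coefficients, which is false---this is precisely the $S^0_{1,1}$ obstruction that forces the $T(1)$-theorem in the last section of the paper. What actually closes the argument is: for $\delta=0$, off-diagonal estimates of arbitrary order at scale $t^{1/m}$ for $\sigma(x,L)\psi(tL)$ (Lemma \ref{lemma2}, via the Sobolev embedding and the frozen-coefficient multiplier bound of Lemma \ref{lemma1}), whose integral over $t\in(0,1)$ converges and gives off-diagonal decay at scale $1$, hence global $L^p$-boundedness; for $\delta\in(0,1)$, an $L^2$ orthogonality argument applied to the \emph{adjoint}, splitting $\gamma_t$ into its ball average plus oscillation, where the weak Poincar\'e inequality yields a Lipschitz-type bound with constant $t^{-\delta/m}$ and hence an integrable gain $t^{(1-\delta)/m}$. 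Finally, your proposal offers no mechanism for reaching the full range $p\in(p_0,p_0')$ when $\delta\in(0,1)$: the paper obtains $p\in(p_0,2]$ from the extrapolation Lemma \ref{lemma:extrapolation} combined with the off-diagonal bounds of Proposition \ref{prop:can}, and the range $(2,p_0')$ by duality together with Proposition \ref{prop:dual} (boundedness of $[\sigma(x,L)]^*-\overline{\sigma}(x,L)$). These are the missing ingredients.
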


In generalization of Calder\'on-Zygmund kernel estimates, we can show that we have off-diagonal estimates for $T_\sigma\psi_t(L)$, where $\psi_t$ is a smooth function adapted to the scale $t^{-1}$. See Proposition \ref{prop:can} below. 
Such operators also satisfy weighted $L^p$-boundedness and act continuously on Sobolev spaces (defined relatively to $\Delta$ and $L$ by Bessel potentials).

We emphasize that this result holds in a very general context on a metric space. In particular, it allows us to regain and improve some existing recent results about pseudodifferential operators on fractal sets, cf. \cite{IRS}.\\
At the same time, we obtain a result on symbols, that are defined by a different operator than the one related to the structure of the underlying metric space. 
Similar generalizations in this direction were  e.g. already obtained in \cite{PortalStrkalj}, where pseudodifferential operators acting on UMD Banach spaces are considered. There, the symbol classes are defined via R-boundedness conditions, which are closely related to the kind of off-diagonal estimates we assume, cf. \cite{Kunstmann}, Theorem 2.2.\\

For the symbol class $S_{1,1}$, the situation is more difficult. We restrict ourselves to the case of a Riemannian manifold with a sub-Laplacian operator $\Delta=L$, where a Leibniz rule is available. In the classical theory of pseudodifferential operators, a main tool in the consideration of the symbol class $S^0_{1,1}$ is the $T(1)$-Theorem by David and Journ\'e \cite{DavidJourne}. We substitute this theorem by a $T(1)$-Theorem, that is associated with the operator $L$. For an exact statement of this result, we refer to \cite{B} in the setting of Poisson kernel bounds for $(e^{-tL})_{t>0}$, and to  \cite{FreyKunstmann} in the more general setting of $L^p$ off-diagonal bounds as described above.
We then prove

\begin{thm} Let $(X,d,\mu)$ be a Riemannian manifold with a sub-Laplacian structure with $L^{p_0}-L^{p_0'}$ off-diagonal decay of the semigroup (Assumption \ref{ass1}), $L^2-L^2$ estimates on the gradient of the semigroup (Assumption \ref{ass2}) and a Poincar\'e inequality (Assumption \ref{ass3}). Let a symbol $\sigma \in S^0_{1,1}$. Then, the pseudodifferential operator $\sigma(x,L)$ is $L^2$-bounded if and only if $[\sigma(x,L)]^*(1) \in \BMO_{L}$. 
\end{thm}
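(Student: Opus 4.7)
The plan is to reduce the $L^2$-boundedness of $\sigma(x,L)$ to the $L$-adapted $T(1)$-theorem of \cite{FreyKunstmann}, by constructing an $L$-adapted paraproduct whose action on the constant function $1$ matches $[\sigma(x,L)]^*(1)$. The necessity direction is essentially a repackaging of the $H^1_L$--$\BMO_L$ duality: if $\sigma(x,L)$ is $L^2$-bounded, so is its adjoint, and combining this with the generalized off-diagonal decay of $[\sigma(x,L)]^*\psi_t(L)$ from Proposition~\ref{prop:can}, a standard Carleson-measure test places $[\sigma(x,L)]^*(1)$, suitably defined modulo constants, in $\BMO_L$.

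For the sufficiency direction, set $b := [\sigma(x,L)]^*(1) \in \BMO_L$ and use the Leibniz rule (available thanks to the sub-Laplacian structure on the manifold) to define
\[
\Pi_b(f) \;:=\; \int_0^\infty \varphi_t(L)\!\left[(\psi_t(L)b)\cdot(\tilde\varphi_t(L)f)\right]\frac{dt}{t},
\]
with suitable Littlewood--Paley projections $\varphi_t,\psi_t,\tilde\varphi_t$ drawn from the functional calculus of $L$. Three properties of $\Pi_b$ need to be established: $L^2$-boundedness, via the Carleson-measure characterization of $\BMO_L$ applied to $b$ together with the spectral $L^2$ square-function estimate for $L$; the identity $[\Pi_b]^*(1) = b$ in $\BMO_L$, by a direct unwinding of the duality pairing; and generalized off-diagonal bounds for $\Pi_b\psi_t(L)$, matching those of $\sigma(x,L)\psi_t(L)$.

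With these in place, consider $T := \sigma(x,L) - \Pi_b$. By construction $T^*(1) = 0$ in $\BMO_L$, and $T\psi_t(L)$ inherits the generalized Calder\'on--Zygmund off-diagonal decay from its two components. The $L$-adapted $T(1)$-theorem of \cite{FreyKunstmann} then yields $L^2$-boundedness of $T$, and hence of $\sigma(x,L)$, once we also verify the remaining hypotheses: a weak-boundedness property for $T$, obtained via almost-orthogonality on the dyadic frequency annuli coming from the elementary-symbol decomposition in the spirit of \cite{CM}, and the control $T(1) \in \BMO_L$, which follows from the fact that $\sigma(x,L)(1)$ lies automatically in $\BMO_L$ for $\sigma\in S^0_{1,1}$ combined with an analogous paraproduct computation for $\Pi_b(1)$.

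The main obstacle will be the construction and analysis of $\Pi_b$. The Leibniz rule for the sub-Laplacian is used critically to split the product $(\psi_t(L)b)\cdot(\tilde\varphi_t(L)f)$ back onto the spectral scale of $L$ while preserving frequency localization; without it, both the square-function estimates underlying the $L^2$-boundedness of $\Pi_b$ and the off-diagonal bounds on $\Pi_b\psi_t(L)$ would break down. A secondary delicate point is to ensure that the definition of $\BMO_L$ used in the statement truly coincides with the tent-space/Carleson-measure formulation on which the $T(1)$-theorem of \cite{FreyKunstmann} is built, so that the hypothesis $b \in \BMO_L$ can be fed directly into the paraproduct machinery without loss.
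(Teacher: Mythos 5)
Your overall strategy is the classical David--Journ\'e reduction: subtract a paraproduct $\Pi_b$ with $b=[\sigma(x,L)]^*(1)$ so that the remainder has vanishing cancellation conditions, then invoke a $T(1)$-type criterion. The paper takes a shorter route: it quotes \cite[Theorem 4.16]{FreyKunstmann} directly in its ``iff'' form (so the paraproduct construction is already internal to the cited theorem), and the entire content of the proof is the verification of the two-sided off-diagonal hypotheses
$\| \Psi(tL) T_{\sigma} \Phi(tL) \|_{L^2(B_1) \to L^2(B_2)} + \| \Psi(tL) [T_{\sigma}]^* \Phi(tL) \|_{L^2(B_1) \to L^2(B_2)} \lesssim (1+d(B_1,B_2)/\sqrt{t})^{-N}$, together with the observation that $L(1)=0$ forces $T_\sigma(1)=\sigma(\cdot,0)\in L^\infty\subset \BMO_L$. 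Your construction of $\Pi_b$ is therefore redundant work, but not in itself wrong.

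The genuine gap is that you never address the estimate on $\Psi(tL)\,T_\sigma\,\Phi(tL)$, i.e.\ the regularity of $T_\sigma$ tested on the \emph{output} side, which is the heart of the matter for $S^0_{1,1}$. Your proposal only records the one-sided bounds for $T_\sigma\psi_t(L)$ (Proposition \ref{prop:can}, which by duality handles $\Psi(tL)[T_\sigma]^*\Phi(tL)$, the paper's Step 2) and then appeals to ``almost-orthogonality on the dyadic frequency annuli'' for the remaining weak-boundedness input. For $\delta=1$ this is precisely what fails: the elementary pieces $\gamma_s(\cdot)\psi_s(L)$ have $\|X_J\gamma_s\|_\infty\lesssim s^{-|J|/2}$, degenerating at the same rate as the frequency localization, so one-sided almost-orthogonality gives no summable gain and Cotlar--Stein is unavailable (this is why $S^0_{1,1}$ operators are not automatically $L^2$-bounded). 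The missing step is the paper's Step 1: expand $t^M L^M=t^M\sum X_I$, apply the sub-Laplacian Leibniz rule $X_I(fg)=\sum_{I_1\sqcup I_2=I}X_{I_1}(f)X_{I_2}(g)$ to $\gamma_s\cdot\psi_s(L)\Phi(tL)f$, control the terms $t^{|I_2|/2}X_{I_2}\psi_s(L)\Phi(tL)$ by the gradient bounds of Assumption \ref{ass2} through Lemma \ref{lemma-bis}, and balance the loss $(t/s)^{|I_1|/2}$ from $X_{I_1}\gamma_s$ against the gain $\min(s/t,1)^{N}$ coming from $\Psi\Phi$-cancellation before integrating in $s$. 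This is the only place where Assumption \ref{ass2} enters, and without it the hypotheses of the $T(1)$-theorem (in whichever formulation) are not verified; your invocation of the Leibniz rule solely for the auxiliary paraproduct $\Pi_b$ does not supply it.
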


Moreover, in \cite{BBR}, algebra properties have been studied for Sobolev spaces (defined by Bessel potential) associated with a semigroup of operators. That corresponds to boundedness of pointwise product in such Sovolev spaces. It is then natural to extend the study for other operations. Here, pseudodifferential operators are shown to be bounded in these scales of Sobolev spaces.

\section{The setting}

\subsection{Space of homogeneous type with Sobolev embedding}

 Let $(X,d)$ be a metric space and let $\mu$ be a non-negative Borel measure on $X$ with  the doubling property: there exists a constant $A_1 \geq 1$ such that for all $x \in X$ and all $r>0$
 \begin{align*}  \mu(B(x,2r)) \leq A_1 \mu(B(x,r)) < \infty, \end{align*}
 where we set $B(x,r):=\{y \in X\,:\, d(x,y)<r\}$.

In particular, $(X,d,\mu)$ is a space of homogeneous type.

It is then well-known that there exists a constant $A_2>0$ and a dimension $n>0$ such that for all $\lambda \geq 1$, for all $x \in X$ and all $r>0$
 \begin{align} \label{doublingProperty2}
 		\mu(B(x,\lambda r)) \leq A_2 \lambda^n \mu(B(x,r)).
 \end{align}
As a consequence, a ball $B(x,tr)$ can be covered by $c t^n$ balls of radius $r$, uniformly in $x\in X$, $r>0$ and $t>1$. Moreover, there also exist constants $C$ and $D$, with $0 \leq D\leq n$, such that 
\begin{align*}
 	\mu(B(y,r)) \leq C \left(1+\frac{d(x,y)}{r}\right)^D \mu(B(x,r))
\end{align*}
uniformly for all $x,y \in X$ and $r>0$.

For a ball $B \subseteq X$ we denote by $r_B$ the radius of $B$ and set
\begin{equation} \label{annuli}
 	S_0(B):=B \qquad \text{and} \qquad S_j(B):= 2^jB \setminus 2^{j-1} B \quad \text{for} \; j=1,2,\ldots,
\end{equation}
where $2^jB$ is the ball with the same center as $B$ and radius $2^jr_B$.\\
For a measurable function $f$ and a ball $B \subseteq X$, we abbreviate $\aver{B} f\,d\mu = \mu(B)^{-1}\int_B f\,d\mu$.\\

We will assume that $X$ is equipped with a Sobolev embedding.

\begin{ass}[Sobolev embedding] \label{ass:sobolev} There exists a non-negative, self-adjoint operator $\Delta$ on $X$, densely defined on $L^2(X)$ (i.e. its domain ${\mathcal D}(\Delta):= \{f\in L^2(X),\ \Delta(f)\in L^2(X)\}$
is dense in $L^2(X)$), an integer $M_0$ and $\kappa>0$ such that for all $t>0$ and all balls $B$ of radius $r=t^{1/2}$, we have
\begin{equation} \label{eq:sobolev}
 \|f\|_{L^\infty(B)} \lesssim \sum_{i\geq 0} 2^{-i\kappa} \ \left(\aver{2^i B}\left|\left(I+t\Delta\right)^{M_0} f \right| d\mu \right),
 \end{equation}
uniformly in $f\in \cap_{N=0}^{M_0} {\mathcal D}(\Delta^N)$.
\end{ass}

\begin{rem} \emph{(i)} Here, we implicitly assume that $\Delta$ is an operator of order $2$ (since we consider $t=r^2$). All our current results can easily be extended if $\Delta$ is of another order. For better readability, we choose this order $2$, but the main idea is to show that this operator may be chosen independently of the other operator $L$ (appearing in the next subsection).\\
\emph{(ii)} The $L^1$ norm appearing in \eqref{eq:sobolev} may  be relaxed to a $L^p$ norm for some $p>1$, depending on the assumptions on the operator $L$ (see the application of \eqref{eq:sobolev} in the proof of Lemma \ref{lemma2} below).
\end{rem}

\begin{ex} Let us point out that if $\Delta$ generates a semigroup $(e^{-t\Delta})_{t>0}$, whose kernel $K_t$ satisfies the pointwise estimate
$$ \left|K_{r^2}(x,y) \right| \lesssim \mu(B(x,r)) \left(1+ \frac{d(x,y)}{r} \right)^{-M}$$
for a sufficiently large exponent $M$, then $\Delta$ satisfies a Sobolev embedding and Assumption \ref{ass:sobolev} is satisfied \cite[Proposition 3.10]{B}.
 \end{ex}

We will also use a weak version of Poincar\'e inequality for the symbols in $S_{1,\delta}$ with $\delta<1$. 

\begin{ass}[Generalized Poincar\'e inequality] \label{ass:poincare} The previous self-adjoint operator $\Delta$ satisfies: for $\kappa>0$ and a large enough integer $M>0$, for all $t>0$ and all balls $B$ of radius $r=t^{1/2}$, we have
$$ \|f - \aver{B} f \|_{L^\infty(2B)} \lesssim r_B \sum_{i\geq 0} 2^{-i\kappa} \ \left(\aver{2^i B}\left|\Delta \left(I+t\Delta\right)^{M} f \right|^2 d\mu \right)^{\frac{1}{4}} \left(\aver{2^i B}\left|\left(I+t\Delta\right)^{M}  f \right|^2 d\mu \right)^{\frac{1}{4}} ,$$
uniformly in $f\in \cap_{N=0}^{M} {\mathcal D}(\Delta^N)$.
\end{ass}
This assumption is weaker than a Poincar\'e type inequality, since it does not require a structure associated with a gradient operator.

\subsection{Symbols associated with self-adjoint operator}

\subsubsection{Self-adjoint operators}

We consider a non-negative, self-adjoint operator $L$ on $L^2(X)$. Due to the spectral theorem, $L$ has a bounded Borel functional calculus on $L^2(X)$.
Let $m>1$ be a fixed constant, representing the order of the operator $L$. 
We assume that the analytic semigroup $(e^{-tL})_{t>0}$, generated by $-L$, satisfies Davies-Gaffney estimates, and, moreover, $L^{p_0}-L^{p_0'}$ off-diagonal estimates for some $p_0 \in [1,2)$:

\begin{ass}[$L^{p_0}-L^{p_0'}$ off-diagonal estimates] \label{ass-Lq-offidag} 
There exist $p_0\in[1,2)$ and constants $C,c>0$ such that for arbitrary balls $B_1,B_2$ of radius $r=t^{1/m}>0$
\begin{equation} \label{eq:Lp0}
 	\|e^{-tL} \|_{L^{p_0}(B_1) \to L^{p_0'}(B_2)} \leq C \mu(B_1)^{\frac{1}{p_0'}-\frac{1}{p_0}} \ e^{-c\left(\frac{d(B_1,B_2)}{t^{1/m}}\right)^{\frac{m}{m-1}}},
\end{equation}
where $\frac{1}{p_0}+\frac{1}{p_0'}=1$.
\end{ass}

\begin{rem}
Assumption \ref{ass-Lq-offidag} implies that  $\{e^{-tL}\}_{t>0}$ satisfies $L^p$-$L^q$ off-diagonal estimates for all $p_0 \leq p \leq q \leq p_0'$, and therefore in particular Davies-Gaffney estimates (i.e. $L^2-L^2$ off-diagonal estimates).  Using the Phragm\'en-Lindel\"of theorem, one can also show extensions to complex times. See e.g. \cite{Blunck}, Theorem 2.1.
\end{rem}

\subsubsection{Class of symbols} \label{subsub}

Since $L$ is self-adjoint, we know that we have a bounded real functional calculus. 
We define $C^\infty(X \times (0,\infty))$ as the set of measurable functions $\sigma\in L^\infty(X \times (0,\infty))$ such that for every integer $j_1 \geq 0$ and $j_2 \in {\mathbb N}+\{0,\frac{1}{2}\}$, the map   
$$ (x,\xi) \mapsto \partial_\xi^{j_1} \Delta^{j_2}[ \sigma(\,.\,,\xi)](x)$$
is continuous.

So let us define our classes of symbols:

\begin{df} Let $\rho,\delta \in[0,1]$ and $s\geq 0$, we set $S^{s}_{\rho,\delta}:=S^{s}_{\rho,\delta}(L)$ the set of symbols $\sigma\in C^\infty(X \times (0,\infty))$ such that
 $$ \forall \alpha,\beta\geq 0, \qquad \left| \partial_\xi^\beta \Delta^\alpha \sigma(\,.\,,\xi) [x] \right| \lesssim (1+|\xi|)^{\frac{s}{m}-\rho \beta + \frac{2}{m}\delta \alpha}.$$
\end{df}

Let us first precise how can we rigorously define pseudodifferential operators. So let $\sigma\in S^s_{\rho,\delta}$ be a symbol. Then as explained in Lemma \ref{lem:decom}, this symbol may be split into a fast decaying sum of elementary symbols, each of them beeing tensorial product of the form 
$$ \tau(x,\xi):=\int_0^1 \gamma_t(x) \psi_t(\xi) \frac{dt}{t}.$$
To such a symbol, the functions $\psi_t$ beeing smooth and compactly supported, we know that $\psi_t(L)$ is well-defined and maps $L^2$ into $L^2$. Consequently, for every $\eps>0$ the operator defined by
$$ \tau(x,L)^\eps:= f \rightarrow \left( x \to \int_\eps^1 \gamma_t(x) \psi_t(L)[f](x) \frac{dt}{t}\right)$$
is well-defined for $f\in L^2$. Obtaining uniform bounds with respect to $\eps>0$ in $L^2$ (or in $L^p$, which will be the aim of our main results) and using that for every test-functions $f\in {\mathcal D}(L)$ we have $\|\psi_t f\|_{L^2} \lesssim t$, we also may define the pseudodifferential operator as follows: for every $f\in{\mathcal D}(L)$
$$ \tau(x,L)(f) = \lim_{\eps\to 0}  \tau(x,L)^\eps (f).$$

Let us also refer to \cite[Lemma A.1]{Nier} and \cite[Appendix]{IP} for a similar reasoning, by replacing the decomposition in elementary symbols with the Dynkin-Helffer-Sj\"ostrand formula. Indeed, if $\sigma(x,\xi)$ is compactly supported in $\xi\in[\eps, \eps^{-1}]$, then we may define 
$$ \sigma(x,L) := \frac{i}{2\pi} \int_{\mathbb C} \overline{\partial} \tilde{\sigma}(x,z) (z+L)^{-1} \, d\overline{z} \wedge dz,$$
where $\tilde{\sigma}$ is an-almost analytic extension of $\sigma$. Here, the idea is the same: we decompose the symbol as a tensorial product of $x$-variable function and a simple multiplier (here involving the resolvant).

\section{Examples}

\subsection*{Riemannian manifold}
In the case of a doubling Riemannian manifold $(X,d,\mu)$ satisfying a $(P_2)$ Poincar\'e inequality, the non-negative Laplacian $\Delta$ satisfies the Sobolev inequality (Assumption \ref{ass:sobolev}), since its heat kernel has Gaussian estimates. Moreover, the Poincar\'e inequality $(P_2)$ implies Assumption \ref{ass:poincare}. Let us sketch the proof of this claim. Indeed we have $L^2-L^2$ off-diagonal estimates for $\nabla \Delta^{-1/2} (1+t\Delta)^{-M}$ with a large enough exponent $M$ (see \cite{ACDH}) which allows us with Poincar\'e inequality and Sobolev inequality, to bound quantity $\|f-\aver{B} f \|_{L^\infty(2Q)}$ by a fast decaying serie with local quantities of the form $\left\| \Delta^{\frac{1}{2}}(f) \right\|_{L^2(2^i B)}$. Each of them are bounded by $\left\| \Delta(f) \right\|_{L^2(2^i B)}^{\frac{1}{2}} \left\| \Delta(f) \right\|_{L^2(2^i B)}^{\frac{1}{2}}$ which proves Assumption \ref{ass:poincare}.

\subsection*{Euclidean case with divergence form operator}
In the Euclidean case: we may choose $X$ as $\R^n$ or a doubling open set of $\R^n$ and so Assumptions \ref{ass:sobolev} and \ref{ass:poincare} are satisfied with the Euclidean Laplacian $\Delta$. Consider a homogeneous elliptic operator $L$ of order $m = 2k$ in $\R^n$ defined by
$$ L(f) := (-1)^k \sum_{|\alpha|=|\beta|=k} \partial^\alpha (a_{\alpha,\beta} \partial ^\beta f),$$
with bounded complex coefficients $a_{\alpha,\beta}$ with $a_{\beta,\alpha}=\overline{a_{\alpha,\beta}}$. Then $L$ is a self-adjoint of order $m$ and its heat semigroup satisfies pointwise estimate ($p_0=1$) if we are in one of the following situation: 
\begin{itemize}
 \item The coefficients $a_{\alpha,\beta}$ are real-valued (see \cite[Theorem 4]{AT});
 \item The coefficients are complex and $n \leq 2k = m$ (see \cite[Section 7.2]{A});
 \item The coefficients are H\"older continuous \cite{AMT}.
\end{itemize}
Else, we only know that there exists $p_0\in[1,2)$ such that Assumption (\ref{ass-Lq-offidag}) holds.\\

\subsection*{Fractal sets and infinite trees}
Assume that $X$ is a p.c.f. fractal set or a highly symmetric Sierpinski carpet. Then using the self-similar structure, it is well-known that we may build a Dirichlet energy ${\mathcal E}$ and then obtain a non-negative Laplace operator $\Delta$ such that
$$ {\mathcal E}(f,g) = \int_X \Delta(f) g d\mu.$$
Defining the effective resistance metric $d(x,y)$ by
$$ d(x,y)^{-1} = \min\{ {\mathcal E}(u,u),\ u(x)=0,\ u(y)=1\},$$
the space $(X,d,\mu)$ is a space of homogeneous type.
Then it is well-known that the operator generates a heat semigroup satisfying sub-Gaussian estimates, so Assumption (\ref{ass:sobolev}) is satisfied.

Let us examine (\ref{ass:poincare}). By definition of the resistance metric, it is obvious that for a function $f\in{\mathcal D}(\Delta)$, we have
\begin{align}
 \left|f(x) -f(y) \right| & \lesssim d(x,y)^{\frac{1}{2}} {\mathcal E}(f,f)^{\frac{1}{2}} \nonumber \\
 & \lesssim d(x,y)^{\frac{1}{2}} \left(\int_X \Delta(f) f d\mu \right)^{\frac{1}{2}} \nonumber \\
 & \lesssim d(x,y)^{\frac{1}{2}} \|\Delta(f)\|_{2}^{\frac{1}{2}} \|f\|_{2}^{\frac{1}{2}}.\label{eq:p}
\end{align}
Then we may localize, so if $x,y$ are in  a ball $B$ of radius $r$, then we choose a bump function $\phi$ (which equals $1$ on $B$ and $0$ in $(2B)^c$). We refer the reader to \cite{RST} for  an abstract construction and existence of such functions. Then we apply (\ref{eq:p}) to $f=f\phi$ and we can prove local estimates with a an extra term in $d(x,y)^{\frac{1}{2}}$. This is not sufficient to get our Assumption (\ref{ass:sobolev}), where we need $d(x,y)$.
In \cite{BCK}, the authors proved that our desired Poincar\'e inequality is closely related to lower-estimates of the heat kernel or to a Harnack inequality. More precisely, the proof of \cite[Lemma 2.3 (1)]{BCK} shows how to improve the previous reasoning to get our weak Poincar\'e inequality. 
Indeed, it is well-known (\cite{BBK,BCK}) that the Poincar\'e inequality is equivalent to two-sided sub-Gaussian estimates for the heat kernel.
In \cite{BBK,BCK}, some examples of fractal graphs, trees, ... are studied in detail and our weak Poincar\'e estimates hold.

In such situations, our results allow us to obtain boundedness for pseudodifferential operators associated with symbols in the class $S^0_{1,\delta}$ for $\delta\in[0,1)$. We then regain some recent results in \cite{IRS} where the class $S^0_{1,0}$ was studied.

\section{Preliminary results}

We start with a result on pseudo-differential operators with constant coefficients. We show that a pseudo-differential operator with constant coefficients (i.e. a spectral multiplier) satisfies off-diagonal estimates at scale $r$, if the underlying symbol is adapted to the scale $r$, as described in \eqref{m-cond1} below. The off-diagonal estimates are of polynomial order, where the order depends on the behaviour of the symbol at $0$.\\ 
The result is already implicitely contained in e.g. \cite{KunstmannUhl} or \cite{Anh}. For convenience of the reader, we give the proof here. 

We use the following 
\emph{partition of unity}:
Let $\eta \in C^{\infty}(0,\infty)$ with $\eta(\xi)=1$ for $0<\xi \leq 1$, and $\eta(\xi)=0$ for $\xi \geq 2$. Define $\delta \in C^{\infty}(0,\infty)$ by $\delta(\xi)=\eta(\xi)-\eta(2\xi)$. Then $\supp \delta(2^{-j} \,\cdot\,) \subseteq  [2^{j-1},2^{j+1}]$ and
\[
		1 = \sum_{j=-\infty}^{\infty} \delta(2^{-j}\xi), \qquad \xi >0.
\]

\begin{lem} \label{lemma1}
Let $p \in (p_0,p_0')$. Let $\nu >n\abs{\frac{1}{p}-\frac{1}{2}}$ and $N>\frac{\nu}{m}$, let $r>0$. Let $F$ be a smooth function on $[0,\infty)$ with
\begin{equation} \label{m-cond1}
 \left|\partial^\beta_\xi F(\xi)  \right| \lesssim \min(1,(r^m|\xi|)^N) |\xi|^{-\beta}
\end{equation}
for all indices $0 \leq \beta \leq \lfloor \nu \rfloor +1$. 
Then there exists a constant $C>0$, independent of $r>0$, such that for all balls $B_1,B_2$ of radius $r$ and all $f \in L^p(X)$ with $\supp f \subseteq B_1$
\begin{align*}
 	\norm{F(L)f}_{L^p(B_2)} \leq C \left(1+\frac{d(B_1,B_2)}{r}\right)^{-\nu} \norm{f}_{L^p(B_1)}.
\end{align*}
\end{lem}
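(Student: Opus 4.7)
The plan is a dyadic decomposition of the multiplier $F$ adapted to the scale $r^{-m}$, followed by an off-diagonal bound for each dyadic piece derived from the $L^{p_0}$--$L^{p_0'}$ off-diagonal decay of the semigroup (Assumption~\ref{ass-Lq-offidag}), and finally a summation argument that exploits the hypotheses $N > \nu/m$ and $\nu > n|1/p-1/2|$.

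First, using the partition of unity introduced just before the lemma, I write $F=\sum_{j\in\Z} F_j$ with $F_j(\xi):=F(\xi)\,\delta(2^{-j}\xi)$ supported in $[2^{j-1},2^{j+1}]$. The hypothesis \eqref{m-cond1} together with the Leibniz rule yields, for $0\le\beta\le \lfloor\nu\rfloor+1$,
\[
\sup_{\xi}\bigl|\partial_\xi^\beta F_j(\xi)\bigr| \;\lesssim\; A_j\, 2^{-j\beta}, \qquad A_j:=\min\bigl(1,(r^m 2^j)^N\bigr),
\]
so each $F_j$ is a smooth bump at spectral scale $2^j$ with amplitude $A_j$ that is very small when $2^j\ll r^{-m}$ and saturates at $1$ beyond that.

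Next, for each $j$ I claim the off-diagonal estimate at the intrinsic spatial scale $2^{-j/m}$:
\[
\|F_j(L)g\|_{L^p(B')} \;\lesssim\; A_j\bigl(1+d(B,B')\,2^{j/m}\bigr)^{-\nu}\|g\|_{L^p(B)}
\]
for all balls $B,B'$ of radius $2^{-j/m}$ and $g$ supported in $B$. To obtain it I factor $F_j(L)=G_j(L)\,e^{-2^{-j}L}$, where $G_j(\xi):=e^{2^{-j}\xi}F_j(\xi)$ is smooth, compactly supported in $[2^{j-1},2^{j+1}]$, and inherits the derivative bounds on $F_j$ up to constants. The semigroup factor $e^{-2^{-j}L}$ has $L^{p_0}$--$L^{p_0'}$ off-diagonal decay at scale $2^{-j/m}$ by Assumption~\ref{ass-Lq-offidag}, while $G_j(L)$ is bounded on $L^2$ by the spectral theorem with norm $\le\|G_j\|_\infty\lesssim A_j$; composing and interpolating (Riesz--Thorin between the $L^2$ bound and the $L^{p_0}$--$L^{p_0'}$ off-diagonal decay) together with an annular decomposition that trades the $\lfloor\nu\rfloor+1$ available derivatives of $F_j$ for polynomial spatial decay of order $\nu$ gives the claim. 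This is essentially the Kunstmann--Uhl spectral multiplier argument and is valid precisely for $p\in(p_0,p_0')$. To return to the original scale $r$, I cover $B_1$ and $B_2$ by $\lesssim(r\,2^{j/m})^n$ sub-balls of radius $2^{-j/m}$ when $2^j r^m \ge 1$ (no covering is needed when $2^j r^m\le 1$), apply the per-piece bound to each source--target pair, use doubling and interpolation to collect a geometric-loss factor of order $(r\,2^{j/m})^{n|1/p-1/2|}$, and observe the splitting $(1+d(B_1,B_2)\,2^{j/m})^{-\nu}\lesssim (1+d(B_1,B_2)/r)^{-\nu}(r^m 2^j)^{-\nu/m}$ whenever $d(B_1,B_2)\,2^{j/m}\ge 1$. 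Summing in $j$, the low-frequency range $2^jr^m\le 1$ is controlled by the $(r^m 2^j)^N$ factor in $A_j$, while the high-frequency range $2^jr^m\ge 1$ is controlled by $N>\nu/m$ together with $\nu > n|1/p-1/2|$ which absorbs the covering loss; the outcome is the desired $(1+d(B_1,B_2)/r)^{-\nu}$ decay.

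The main obstacle is the middle step: turning the $L^2$-boundedness of $G_j(L)$ (free from the spectral theorem) and the Davies--Gaffney-type $L^{p_0}$--$L^{p_0'}$ off-diagonal decay of the semigroup into an $L^p$--$L^p$ off-diagonal estimate for $F_j(L)$ of \emph{sharp} polynomial order $\nu$, dictated by the number of controlled derivatives of $F_j$. Because only generalized Gaussian estimates (and not pointwise heat kernel bounds) are available, one must keep the whole analysis within the range $(p_0,p_0')$ and carefully track how each derivative of $F_j$ contributes one unit of spatial decay through the annular/integration-by-parts argument; this is where both thresholds $p\in(p_0,p_0')$ and $\nu > n|1/p-1/2|$ enter naturally.
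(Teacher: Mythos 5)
Your strategy for well-separated balls --- dyadic decomposition of $F$, a per-piece off-diagonal bound of Kunstmann--Uhl type, and a summation exploiting $N>\nu/m$ at low frequencies and the size of $\nu$ at high frequencies --- is essentially the route the paper takes (the paper decomposes $G(\xi)=F(\xi^m)$ and invokes \cite[Lemma 4.10]{KunstmannUhl} directly for the balls $B_1,B_2$ of radius $r$, which spares it your covering step). But there is a genuine gap in the near-diagonal regime $d(B_1,B_2)\lesssim r$. There your splitting $(1+d(B_1,B_2)2^{j/m})^{-\nu}\lesssim(1+d(B_1,B_2)/r)^{-\nu}(r^m2^j)^{-\nu/m}$ is unavailable for the frequencies with $2^{-j/m}>d(B_1,B_2)$, and in the high-frequency range $2^jr^m\ge 1$ the amplitude is $A_j=1$; even granting the best conceivable per-piece bound $\|F_j(L)\|_{L^p(B_1)\to L^p(B_2)}\lesssim A_j$ with no covering loss at all, the series $\sum_{j:\,2^jr^m\ge 1}A_j$ diverges. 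No piece-by-piece summation can close this case: one must exploit cancellation across frequencies by applying a spectral multiplier theorem to $F$ as a whole, using that $F$ satisfies a H\"ormander-type condition with $\lfloor\nu\rfloor+1>n\abs{\frac1p-\frac12}$ controlled derivatives. This is exactly how the paper proceeds: it first disposes of $d(B_1,B_2)\le r$ via the spectral theorem (for $p=2$) and \cite[Theorem 5.4 b)]{KunstmannUhl} (for $p\ne 2$), and only then runs the dyadic argument for $d(B_1,B_2)>r$.

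A secondary point: your change-of-scale step claims a covering loss of only $(r2^{j/m})^{n|1/p-1/2|}$ ``by interpolation''. That is the exponent you need (a crude $L^p$ Schur summation of the fine-scale off-diagonal bounds would instead give the exponent $n$, which would force $\nu>n$ in your high-frequency sum and weaken the lemma), but as written it is an assertion rather than an argument. You should either prove the interpolated change-of-scale estimate, or, better, follow the paper and use a per-piece off-diagonal bound stated directly for the balls of radius $r$, which removes the covering step altogether.
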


\vsp

\begin{proof}
Let $B_1,B_2$ be two balls of radius $r>0$ in $X$, let $f \in L^p(X)$ with $\supp f \subseteq B_1$.
The estimate 
\[
		\norm{F(L)f}_{L^p(B_2)} \leq C \norm{f}_{L^p(B_1)}.
\]
follows for $p=2$ directly from the fact that $L$ is self-adjoint, and thus has a bounded Borel functional calculus on $L^2(X)$.
For $p \neq 2$, the estimate follows from spectral multiplier results (see e.g. \cite{KunstmannUhl}, Theorem 5.4 b)), where we use that $\nu >n\abs{\frac{1}{p}-\frac{1}{2}}$.

This yields the desired estimate for $d(B_1,B_2)\leq r$. For $d(B_1,B_2)>r$, on the other hand, we denote $G(\xi):=F(\xi^m)$, 
and split $G$ into
\[
		G(\xi) = \sum_{j=-\infty}^{\infty} G(\xi) \delta(2^{-j}\xi) = \sum_{j=-\infty}^{\infty} G_j(\xi), \qquad \xi \neq 0.
\]
Then $\supp G_j \subseteq [2^{j-1}, 2^{j+1}]$ for all $j \in \Z$. Denote $\eps:=\lfloor \nu \rfloor +1 -\nu >0$.
The application of (a slight modification of) \cite{KunstmannUhl}, Lemma 4.10, to $G_j$ yields
\begin{equation} \label{m-est-eq1}
		\|G_j(\sqrt[m]{L})f\|_{L^p(B_2)} \leq C \frac{2^{-j\nu}}{d(B_1,B_2)^\nu} \|\delta_{2^{j+1}}G_j\|_{W^{\infty}_{\nu+\eps}}  \|f\|_{L^p(B_1)}.
\end{equation}
Observe now that for each $j \in \Z$, we have $\supp \delta_{2^{j+1}}G_j \subseteq [\frac{1}{4} ,1]$ and, due to assumption \eqref{m-cond1},
 $\|\delta_{2^{j+1}}G_j\|_{W^{\infty}_{\nu+\eps}} \lesssim \min(1,(2^jr)^{mN})$.
Plugging this into \eqref{m-est-eq1}, we obtain for each $0<N'\leq N$
\begin{equation} \label{m-est-eq2}
			\|G_j(\sqrt[m]{L})f\|_{L^p(B_2)} \lesssim  \frac{2^{-j\nu}}{d(B_1,B_2)^\nu} (2^jr)^{mN'} \|f\|_{L^p(B_1)}.
\end{equation}
If $2^{j}r<1$, we choose $N'=N$ and write
\[
 		\frac{2^{-j\nu}}{d(B_1,B_2)^\nu} (2^jr)^{mN} = \frac{r^{\nu}}{d(B_1,B_2)^\nu} (2^jr)^{mN-\nu},
\]
if, on the other hand, $2^{j}r \geq 1$, we choose $N'<\frac{\nu}{m}$ and write
\[
 		\frac{2^{-j\nu}}{d(B_1,B_2)^\nu} (2^jr)^{mN'} = \frac{r^\nu}{d(B_1,B_2)^\nu} (2^jr)^{-(\nu-mN')}.
\]
Finally, observe that there exists a constant $C>0$, independent of $r$, such that
\[
 		\sum_{j \in \Z: \, 2^jr<1} (2^jr)^{mN-\nu} + \sum_{j \in \Z: \, 2^jr \geq 1} (2^jr)^{-(\nu-mN')} <C,
\]
since $N>\frac{\nu}{m}$ and $N'<\frac{\nu}{m}$.
Summing over $j$ in \eqref{m-est-eq2} yields the assertion.
\end{proof}

The next lemma gives an almost orthogonality condition for multipliers of the above form, similar to e.g. \cite{AuscherMcIntoshRuss}, Lemma 3.7.

\begin{lem} \label{schur-lemma}
Let $p \in (p_0,p_0')$. 
Let $\eps_1,\eps_2>0$, let $\nu>n\abs{\frac{1}{p}-\frac{1}{2}}$ and  $N>\frac{\nu}{m}$, and let $s,t>0$. Let $\psi_s$ and $\tilde{\psi}_t$ be smooth funtions on $[0,\infty)$ with
\begin{align} \label{schur-lemma-cond1}
	\abs{\partial_\xi^\beta \psi_s(\xi)}
			&\lesssim \min((s\abs{\xi})^{-\eps_2},(s\abs{\xi})^{N+\eps_1}) \abs{\xi}^{-\beta}, \\ \label{schur-lemma-cond2}
	\abs{\partial_\xi^\beta \tilde{\psi}_t(\xi)}
			&\lesssim \min((t\abs{\xi})^{-\eps_1},(t\abs{\xi})^{N+\eps_2}) \abs{\xi}^{-\beta} 
\end{align}
for all indices $0 \leq \beta \leq \lfloor \nu \rfloor +1$.
Then the operator $\psi_s(L)\tilde{\psi}_t(L)$ satisfies $L^p$ off-diagonal estimates of order $\nu$ in $\max(s,t)^{1/m}$, with an extra factor $\min\left(\left(\frac{s}{t}\right)^{\eps_1},\left(\frac{t}{s}\right)^{\eps_2}\right)$. 
\end{lem}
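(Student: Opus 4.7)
I would reduce the statement to an application of Lemma~\ref{lemma1} to the composite spectral multiplier $F(\xi) := \psi_s(\xi)\tilde{\psi}_t(\xi)$. By the symmetry between hypotheses \eqref{schur-lemma-cond1} and \eqref{schur-lemma-cond2}, I may assume without loss of generality that $s \geq t$; the opposite case is handled analogously and yields the factor $(s/t)^{\eps_1}$ (note that when $s\geq t$ the minimum in the conclusion equals $(t/s)^{\eps_2}$, since $\eps_1,\eps_2>0$). The goal thus becomes to show that $F(L)$ satisfies off-diagonal estimates of order $\nu$ at scale $r := s^{1/m} = \max(s,t)^{1/m}$, with an additional multiplicative constant $(t/s)^{\eps_2}$.

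The heart of the argument is to establish the pointwise bound
\[ |\partial_\xi^\beta F(\xi)| \lesssim (t/s)^{\eps_2} \min(1, (s|\xi|)^N) |\xi|^{-\beta}, \qquad 0 \leq \beta \leq \lfloor \nu \rfloor + 1, \]
which is precisely condition \eqref{m-cond1} for $F$ at scale $r$, up to the constant $(t/s)^{\eps_2}$. By the Leibniz rule and because both hypotheses already carry a factor $|\xi|^{-\beta}$ on the right-hand side, it suffices to check the case $\beta = 0$. I would then split the computation into three regimes according to the size of $|\xi|$ relative to $1/s$ and $1/t$. In the regime $|\xi| \leq 1/s$ (so that both $s|\xi|$ and $t|\xi|$ are $\leq 1$), I would bound $|F(\xi)| \leq (s|\xi|)^{N+\eps_1}(t|\xi|)^{N+\eps_2}$ and rewrite this as $(t/s)^{\eps_2} (s|\xi|)^N \cdot (s|\xi|)^{\eps_1+\eps_2}(t|\xi|)^N$, whose last two factors are $\leq 1$. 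In the intermediate regime $1/s < |\xi| \leq 1/t$, I would use $|F(\xi)| \leq (s|\xi|)^{-\eps_2}(t|\xi|)^{N+\eps_2} = (t/s)^{\eps_2}(t|\xi|)^N \leq (t/s)^{\eps_2}$. Finally, for $|\xi| > 1/t$, I would use $|F(\xi)| \leq (s|\xi|)^{-\eps_2}(t|\xi|)^{-\eps_1} = (t/s)^{\eps_2}(t|\xi|)^{-(\eps_1+\eps_2)} \leq (t/s)^{\eps_2}$, exploiting $t|\xi| > 1$.

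Once the pointwise bound on $F$ is established, Lemma~\ref{lemma1} applied to $F$ at scale $r = s^{1/m}$ immediately yields
\[ \|\psi_s(L)\tilde\psi_t(L) f\|_{L^p(B_2)} \lesssim (t/s)^{\eps_2} \left(1 + \frac{d(B_1,B_2)}{r}\right)^{-\nu} \|f\|_{L^p(B_1)} \]
for balls $B_1, B_2$ of radius $r = \max(s,t)^{1/m}$ and $\supp f \subseteq B_1$, which is the claim in the case $s \geq t$. The main obstacle is the three-regime case analysis for the pointwise bound on $F$: each case is a short algebraic check, but one has to track carefully how the powers of $s|\xi|$ and $t|\xi|$ combine so that the extra factor $(t/s)^{\eps_2}$ emerges cleanly; conceptually, nothing beyond Lemma~\ref{lemma1} and the Leibniz rule is required.
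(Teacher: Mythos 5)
Your proof is correct and follows essentially the same route as the paper: both reduce the statement to Lemma \ref{lemma1} by verifying that the product symbol satisfies \eqref{m-cond1} at the scale $\max(s,t)^{1/m}$ up to the extra factor $\min\left(\left(\frac{s}{t}\right)^{\eps_1},\left(\frac{t}{s}\right)^{\eps_2}\right)$. The only cosmetic difference is that the paper extracts this factor by the algebraic factorization $\psi_s(\xi)\tilde{\psi}_t(\xi) = \left(\frac{s}{t}\right)^{\eps_1}\bigl[(s\xi)^{-\eps_1}\psi_s(\xi)\bigr]\bigl[(t\xi)^{\eps_1}\tilde{\psi}_t(\xi)\bigr]$, whereas you obtain the same pointwise bound by a three-regime case analysis in $\abs{\xi}$; both computations are valid.
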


\begin{proof}
Since all assumptions are symmetric in $s$ and $t$, it suffices to consider the case $s<t$. 
We write $\psi_s(\xi)\tilde{\psi}_t(\xi) = \left(\frac{s}{t}\right)^{\eps_1} (s\xi)^{-\eps_1} \psi_s(\xi) (t\xi)^{\eps_1}\tilde{\psi}_t(\xi)$. Then observe that the functions $\xi \mapsto (s\xi)^{-\eps_1} \psi_s(\xi)$ and $\xi \mapsto (t\xi)^{\eps_1}\tilde{\psi}_t(\xi)$ satisfy the assumptions of Lemma \ref{lemma1}, and thus also the function $\xi \mapsto (s\xi)^{-\eps_1} \psi_s(\xi) (t\xi)^{\eps_1}\tilde{\psi}_t(\xi)$ satisfies the assumptions of Lemma \ref{lemma1} with respect to $t=\max(s,t)$. This gives the desired result.
\end{proof}

To pass from a multiplier to a pseudo-differential operator (with non-constant coefficients), we use the standard freezing argument of Coifman and Meyer, together with the 
Sobolev embedding given in Assumption \ref{ass:sobolev}.

\begin{lem} \label{lemma2}
Let $p \in (p_0,p_0')$. Let $\nu >n\abs{\frac{1}{p}-\frac{1}{2}}$ and $N>\frac{\nu}{m}$, let $r\in (0,1]$. Let $\sigma$ be a smooth function on $X \times (0,\infty)$ with
\begin{equation} \label{m-cond1-bis}
 \left|\Delta_x^\alpha \partial^\beta_\xi \sigma(x,\xi)  \right| \lesssim \min(1,(r^m|\xi|)^{N}) |\xi|^{-\beta}
\end{equation}
for all indices $\alpha,\beta$ with $0 \leq  \alpha \leq M_0$ and $0 \leq \beta \leq \lfloor \nu \rfloor +1$.  
Then there exists a constant $C>0$, independent of $r$, such that for all balls $B_1,B_2$ of radius $r$ and all $f \in L^p(X)$ with $\supp f \subseteq B_1$
\begin{align*}
 	\norm{\sigma(x,L)f}_{L^p(B_2)} \leq C \left(1+\frac{d(B_1,B_2)}{r}\right)^{-\nu} \norm{f}_{L^p(B_1)}.
\end{align*}
\end{lem}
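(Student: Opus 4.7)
The plan is to use the Coifman--Meyer style freezing argument combined with the Sobolev embedding of Assumption \ref{ass:sobolev} to reduce to the constant-coefficient off-diagonal bound provided by Lemma \ref{lemma1}. The idea is to exploit the $\Delta_x$-smoothness of $\sigma$ by passing a suitable power of $(I+r^2\Delta_y)$ between the symbol and the operator, thereby producing a family of frozen multipliers for which Lemma \ref{lemma1} applies uniformly.

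First I would fix $x \in B_2$ and introduce the auxiliary function $G_x(y) := [\sigma(y,L)f](x)$, viewed as a function of $y \in X$. Since $\Delta$ acts on the variable $y$ only and commutes with the spectral resolution of $L$, one has
\[
 (I+r^2 \Delta_y)^{M_0} G_x(y) = [\tilde\sigma(y,L) f](x), \qquad \tilde\sigma(y,\xi) := (I+r^2 \Delta_y)^{M_0} \sigma(y,\xi).
\]
Expanding $(I+r^2\Delta_y)^{M_0}$ binomially, using assumption \eqref{m-cond1-bis} on $\Delta_x^\alpha \partial_\xi^\beta \sigma$ for $0\leq \alpha\leq M_0$, and the restriction $r \in (0,1]$ to absorb the factor $r^{2M_0}$, one verifies that $\tilde\sigma(y,\cdot)$ satisfies the hypothesis \eqref{m-cond1} of Lemma \ref{lemma1} uniformly in $y \in X$.

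Next I would apply Assumption \ref{ass:sobolev} to $G_x$ on the ball $B_2$ (of radius $r$, so $t=r^2$), which, since $x \in B_2$, yields the pointwise bound
\[
 |\sigma(x,L) f(x)| = |G_x(x)| \leq \|G_x\|_{L^\infty(B_2)} \lesssim \sum_{i\geq 0} 2^{-i\kappa} \aver{2^i B_2} |\tilde\sigma(y,L) f(x)|\,d\mu(y).
\]
Taking the $L^p(B_2)$-norm in $x$ and applying Minkowski's integral inequality to interchange the average in $y$ with the $L^p$-norm in $x$ gives
\[
 \|\sigma(\,\cdot\,,L) f\|_{L^p(B_2)} \lesssim \sum_{i\geq 0} 2^{-i\kappa} \aver{2^i B_2} \|\tilde\sigma(y,L) f\|_{L^p(B_2)}\,d\mu(y).
\]
For each fixed $y$, Lemma \ref{lemma1} applied to the multiplier $\tilde\sigma(y,L)$ at scale $r$, with the same $\nu$ and $N$, bounds $\|\tilde\sigma(y,L)f\|_{L^p(B_2)}$ by $(1+d(B_1,B_2)/r)^{-\nu} \|f\|_{L^p(B_1)}$ uniformly in $y$. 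Summing the convergent geometric series $\sum_i 2^{-i\kappa}$ then yields the claim.

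The main obstacle I expect is the rigorous justification of the first step: namely, that $G_x$ lies in $\cap_{N=0}^{M_0}\mathcal{D}(\Delta^N)$ so that Assumption \ref{ass:sobolev} is applicable, and that $(I+r^2\Delta_y)^{M_0}$ may be legitimately pushed inside the spectral resolution $dE_L(\xi)$ to produce the symbol $\tilde\sigma$. This will likely require a density argument on $f$ (taking $f \in \mathcal{D}(L^N)$ with $N$ large, using the mollification scheme described in Subsubsection \ref{subsub}) together with the continuity in $x$ of $\Delta_x^\alpha \partial_\xi^\beta \sigma$ built into the definition of $C^\infty(X \times (0,\infty))$. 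Once these technicalities are handled, the remainder is a routine chaining of Minkowski with the frozen-symbol bound of Lemma \ref{lemma1}.
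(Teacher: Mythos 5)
Your proposal is correct and follows essentially the same route as the paper: freeze the $y$-variable via the Sobolev embedding of Assumption \ref{ass:sobolev} applied to $y\mapsto\sigma(y,L)f(x)$, check that the frozen symbols (there written as $(r^2\Delta_y)^j\sigma(y,\cdot)$ for $0\le j\le M_0$ rather than bundled into $(I+r^2\Delta_y)^{M_0}\sigma$, using $r\le 1$) satisfy \eqref{m-cond1} uniformly, apply Lemma \ref{lemma1}, and sum the series in $i$. The only cosmetic difference is that the paper invokes the $L^p$-average variant of the embedding while you keep the $L^1$ average and interchange via Minkowski, which is equally valid.
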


\begin{proof}  Indeed, choose two balls $B_1,B_2$ of radius $r\in (0,1]$ and a function $f \in L^p(X)$, supported on $B_1$. Then we have for almost every $x\in B_ 2$
$$ \left| \sigma(x,L)(f)(x) \right| \leq \esssup_{y\in B_2} \left| \sigma(y,L)(f)(x) \right|.$$
According to Assumption \ref{ass:sobolev}, we know that 
$$ \left| \sigma(x,L)(f)(x) \right| \lesssim \sum_{i \geq 0} \sum_{j=0}^{M_0} 2^{-i\kappa} \mu(2^i B_2)^{-\frac{1}{p}}\left\| (r^2\Delta)^ j \sigma(\,\cdot\,,L)(f)(x) \right\|_{L^p(2^i B_ 2)},$$
where $M_0$ is the integer given in Assumption \ref{ass:sobolev}. This yields
$$ \left\| \sigma(x,L)(f) \right\|_{L^p(B_ 2)} \lesssim \sum_{i \geq 0}\sum_{j=0}^{M_0} 2^{-i\kappa}\mu(2^iB_2)^{-\frac{1}{p}} \left\| (y,x) \mapsto (r^2\Delta_y)^ j \sigma(y,L)(f)(x) \right\|_{L^p(2^iB_ 2\times B_2)}.$$
Fix $i \geq 0$, fix the point $y\in 2^iB_2$ and $j$, then $ (r^2\Delta_y)^j \sigma(y,L)$ is a multiplier, whose  symbol $ F(\xi) := (r^2\Delta_y)^ j \sigma(y,\xi)$ satisfies (\ref{m-cond1})
because of assumption (\ref{m-cond1-bis}) and the assumption $r \in (0,1]$. Consequently, we may apply Lemma \ref{lemma1} and obtain (uniformly in $y$ and $j$):
$$ \left\| (r^2\Delta_y)^ j \sigma(y,L)(f) \right\|_{L^p(B_2)} \lesssim \left(1+\frac{d(B_1,B_2)}{r}\right)^{-\nu} \|f\|_{L^p(B_1)} .$$
This estimate is uniform, so we may average it over $y\in 2^i B_2$ and sum over $i$ and $j$ in order to finally prove
\be{local-bis} \| \sigma(x,L) f \|_{L^p(B_2)} \lesssim \left(1+\frac{d(B_1,B_2)}{r}\right)^{-\nu} \|f\|_{L^p(B_1)}. \ee 
\end{proof}

In the Euclidean situation, it is well-known that pseudodifferential symbols can be split into elementary symbols \cite{CM}. This reduction is very abstract and we detail it in our context:

\begin{lem}[Decomposition into elementary symbols] \label{lem:decom} Let $\sigma \in S^0_{1,\delta}$ with $\delta\in[0,1]$. Then 
$$ \sigma = \tau + \sum_{l\in\Z} \sigma_l,$$
where $\tau \in S^0_{1,0}$, and $\sigma_l\in S^0_{1,\delta}$ is of the form
$$ \sigma_l(x,\xi) = \int_0^1 \gamma_{t,l}(x) \psi_{t,l}(\xi) \, \frac{dt}{t}$$
with smooth functions $\psi_{t,l}$, $\supp \psi_{t,l} \subseteq [t^{-1},2t^{-1}]$ and
$$ \|\sigma_{l}\|_{S^0_{1,\delta}} \lesssim (1+|l|)^{-M}$$
for every integer $M>0$.
\end{lem}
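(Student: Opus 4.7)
The plan is to follow the classical Coifman--Meyer decomposition into elementary symbols, adapted so that the only Fourier-type expansion is performed in the one-dimensional spectral variable $\xi$. No Fourier analysis on the metric space $X$ is needed: the spatial regularity encoded by $\Delta$ is automatically preserved, because $\Delta$ acts in $x$ and commutes with any integration or summation carried out in $\xi$.

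Concretely, I would proceed in three steps. First, fix $\psi \in C_c^\infty((1,2))$ normalized so that $\Psi(\xi) := \int_0^1 \psi(t\xi)\,\frac{dt}{t} = 1$ for $\xi \geq 2$, and split
$$\sigma(x,\xi) \;=\; \underbrace{\bigl(1-\Psi(\xi)\bigr)\sigma(x,\xi)}_{=:\,\tau(x,\xi)} \;+\; \int_0^1 \psi(t\xi)\,\sigma(x,\xi)\,\frac{dt}{t}.$$
Since $\tau$ has $\xi$-support in $[0,2]$, the $S^0_{1,\delta}$ estimates restricted to this bounded range immediately place $\tau$ in $S^0_{1,0}$. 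Second, for each $t \in (0,1)$ the function $u \mapsto \psi(u)\sigma(x,u/t)$ is smooth and supported in $(1,2)$, so I expand it in a Fourier series of period $2$:
$$\psi(t\xi)\sigma(x,\xi) \;=\; \sum_{l \in \Z} c_l(x,t)\,e^{i\pi l t\xi}, \qquad c_l(x,t) := \tfrac{1}{2}\int_0^2 \psi(u)\,\sigma(x,u/t)\,e^{-i\pi l u}\,du.$$
Third, I choose $\tilde\psi \in C_c^\infty((1/2,5/2))$ with $\tilde\psi \equiv 1$ on $\supp\psi$, so that $\tilde\psi\psi = \psi$, and declare
$$\gamma_{t,l}(x) := c_l(x,t), \qquad \psi_{t,l}(\xi) := \tilde\psi(t\xi)\,e^{i\pi l t\xi}, \qquad \sigma_l(x,\xi) := \int_0^1 \gamma_{t,l}(x)\,\psi_{t,l}(\xi)\,\frac{dt}{t}.$$
By construction $\sigma = \tau + \sum_l \sigma_l$, and $\supp\psi_{t,l}$ is essentially $[1/t,\,2/t]$, matching the stated form up to harmless constants that can be reabsorbed by rescaling $t$ in the final integral.

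The substantive work is then the quantitative bound $\|\sigma_l\|_{S^0_{1,\delta}} \lesssim (1+|l|)^{-M}$ for arbitrary $M$. Because $\Delta$ acts only in $x$ it commutes with the $u$-integral, so $\Delta^\alpha_x c_l(x,t)$ is the $l$-th Fourier coefficient of $u \mapsto \psi(u)(\Delta^\alpha_x\sigma)(x,u/t)$. Integrating by parts $M$ times in $u$ and using $|\partial_\xi^k \Delta^\alpha \sigma(\,\cdot\,,\xi)(x)| \lesssim (1+|\xi|)^{-k+2\delta\alpha/m}$ with $\xi = u/t \sim 1/t$, each $\partial_u$ produces a factor $t^{-1}$ that is precisely cancelled by the $(1+|\xi|)^{-1}$ gained from the corresponding $\partial_\xi$, so $|\Delta^\alpha_x c_l(x,t)| \lesssim_M (1+|l|)^{-M}\,t^{-2\delta\alpha/m}$. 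Differentiating $\psi_{t,l}$ directly gives $|\partial_\xi^\beta \psi_{t,l}(\xi)| \lesssim t^\beta(1+|l|)^\beta$, and for fixed $\xi$ the integration in $t$ is concentrated on an interval of logarithmic length $O(1)$; combining these estimates and taking $M$ larger than any prescribed threshold yields the desired seminorm bound.

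The one delicate point I expect is the $\delta = 1$ case, where I must track carefully that the cancellation between $\partial_u$ and $\partial_\xi$ in the integration-by-parts step really produces only the benign factor $t^{-2\delta\alpha/m}$, and that the polynomial $l$-losses incurred by differentiating $e^{i\pi l t\xi}$ remain controlled by the rapid decay of the Fourier coefficients $c_l$. This is the standard bookkeeping issue in the Coifman--Meyer decomposition; it is no worse in our setting because the whole expansion is one-dimensional in $\xi$ and the operator $\Delta$ enters only as a commuting differentiation on the $x$-side.
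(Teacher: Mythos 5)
Your proposal is correct and follows essentially the same route as the paper: the same splitting $\sigma=\tau+\int_0^1\psi(t\xi)\sigma\,\frac{dt}{t}$, a Fourier-series expansion of the compactly supported $\xi$-slice (the paper periodizes in $\xi$ with period $t^{-1}$ rather than rescaling to $u=t\xi$, which is the same computation), integration by parts to get the $(1+|l|)^{-M}$ decay with the loss $t^{-\frac{2}{m}\delta\alpha}$ from the $\Delta^\alpha$-derivatives, and reinsertion of a cutoff $\tilde\psi$ equal to $1$ on $\supp\psi$. The bookkeeping you flag for $\delta=1$ works out exactly as you predict and matches the paper's estimate $|\Delta_x^{j}\gamma_{l,t}|\lesssim |l|^{-M}t^{-\frac{2}{m}\delta j}$.
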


\begin{proof} Let $\sigma \in S^0_{1,\delta}$ with $\delta\leq 1$. Let $\phi,\psi \in \mathcal{S}(\R)$ with $\supp \phi \subseteq [0,2]$ and $\supp \psi \subseteq [1,2]$ such that for every $\xi \in (0,\infty)$
$$ 1=\int_0^1 \psi(t\xi)\,\frac{dt}{t} + \phi(\xi).$$
We then have 
$$ \sigma(x,\xi) = \int_0^1  \sigma(x,\xi) \psi(t \xi) \frac{dt}{t} + \phi(\xi)\sigma(x,\xi) =: \sigma^1(x,\xi)+ \tau (x,\xi).$$
For the second quantity, we have $\tau (x,\xi) = \phi(\xi)\sigma(x,\xi) \in S^0_{1,0}$.
Let us now fix $x\in X$. For every $t\in(0,1)$, the function $\xi \to  \sigma(x,\xi) \psi(t\xi)$ has support in $[t^{-1},2t^{-1}]$ and can be extended to a $t^{-1}$-periodic function. Hence, we know that we can expand it as a Fourier series, which yields for $\xi \in [t^{-1},2t^{-1}]$
$$ \sigma(x,\xi) \psi(t \xi) = \sum_{l\in\Z} \gamma_{l,t}(x) e^{2i\pi l t \xi},$$
where 
$$ \gamma_{l,t}(x) := t \int_{t^{-1}}^{2t^{-1}} \sigma(x,\eta) \psi(t \eta) e^{-2i\pi l t \eta} d\eta.$$
Using the regularity assumption on the symbols, we obtain via integration by parts for every $l \in \Z \setminus \{0\}$ and every $M \in \N$
\begin{align*}
 \left| \Delta_x^{j} \gamma_{l,t}(x) \right| & \lesssim \left(\frac{1}{\abs{l}t}\right)^M t\int_{t^{-1}}^{2t^{-1}} \left|\partial_\eta^M \Delta_x^{j}  \sigma(x,\eta) \psi(t \eta)\right| d\eta \\
 & \lesssim \left(\frac{1}{\abs{l}t}\right)^M t^{M-\frac{2}{m}\delta j} \lesssim \abs{l}^{-M} t^{-\frac{2}{m} \delta j}.
\end{align*}
So by choosing another function $\tilde{\psi} \in \mathcal{S}(\R)$ (with $\tilde{\psi}=1$ on the support of $\psi$ and $\tilde \psi$ still supported on $[1,2]$), we have
$$ \sigma^1(x,\xi) = \sum_{l\in\Z} \sigma_l(x,\xi)$$
where the elementary symbols
$$ \sigma_l(x,\xi) := \int_0^1 \gamma_{l,t}(x) \tilde{\psi}(t\xi) e^{2i\pi lt\xi} \frac{dt}{t}$$
have the expected properties.
\end{proof}

The following proposition yields $L^p$ off-diagonal estimates for approximations $T_\sigma\tilde{\psi}_t(L)$ of pseudodifferential operators $T_\sigma$, when $\sigma$ is an elementary symbol. Here, $T_\sigma$ is defined via \eqref{def-psdo}.

\begin{prop} \label{prop:can}
Let $p \in (p_0,p_0')$.
 Consider an elementary symbol, as appeared in the previous lemma, of the form
$$ \sigma(x,\xi) = \int_0^1 \gamma_{s}(x) \psi_{s}(\xi) \, \frac{ds}{s},$$
with $\|\gamma_s\|_{L^\infty} \lesssim 1$ uniformly in $s$, 
and $\psi_s$ a smooth function with compact support, adapted to the scale $s^{-1}$.
Let $t>0$, and let $\tilde{\psi}_t$ be a smooth function on $[0,\infty)$, satisfying \eqref{schur-lemma-cond2} with $\nu>n\abs{\frac{1}{p}-\frac{1}{2}}$, $N>\frac{\nu}{m}$, $\eps_1>0$ and $\eps_2>\frac{\nu}{m}$. 
Then the operator $T_{\sigma} \tilde{\psi}_t(L)$ satisfies $L^p$ off-diagonal estimates of order $\nu$ at the scale $t^{\frac{1}{m}}$.
\end{prop}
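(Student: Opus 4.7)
The strategy is to write
$$T_\sigma \tilde{\psi}_t(L) f(x) = \int_0^1 \gamma_s(x)\,\bigl[\psi_s(L)\tilde{\psi}_t(L) f\bigr](x)\,\frac{ds}{s}$$
and reduce the claim, via Minkowski's inequality and the uniform bound $\|\gamma_s\|_\infty\lesssim 1$, to estimating $\|\psi_s(L)\tilde{\psi}_t(L)f\|_{L^p(B_2)}$ for balls $B_1,B_2$ of radius $t^{1/m}$ with $\supp f\subseteq B_1$. Because $\psi_s$ is smooth and supported in $[s^{-1},2s^{-1}]$, on its support $s|\xi|\sim 1$ and one trivially has $|\partial_\xi^\beta\psi_s(\xi)|\lesssim|\xi|^{-\beta}$, so condition \eqref{schur-lemma-cond1} holds for every choice of parameters. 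Lemma~\ref{schur-lemma} then supplies off-diagonal estimates of order $\nu$ for $\psi_s(L)\tilde{\psi}_t(L)$ at scale $\max(s,t)^{1/m}$, with gain factor $\min\bigl((s/t)^{\eps_1},(t/s)^{\eps_2}\bigr)$.

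I would split the integral at $s=t$. For $s\leq t$ the scale matches the radii of $B_1,B_2$, and the lemma directly yields a factor $(s/t)^{\eps_1}(1+d(B_1,B_2)/t^{1/m})^{-\nu}$, which is integrable in $ds/s$ on $(0,t]$ since $\eps_1>0$. For $s>t$, Lemma~\ref{schur-lemma} controls the operator only at the larger scale $s^{1/m}$; I would enlarge $B_1,B_2$ to concentric balls of radius $s^{1/m}$, use monotonicity of the $L^p$-norm on nested sets to replace $L^p(B_2)$ by the $L^p$-norm on the enlarged ball, and then convert the resulting bound back to scale $t^{1/m}$ via the elementary inequality
$$\bigl(1+\rho/R\bigr)^{-\nu} \leq (R/r)^\nu\bigl(1+\rho/r\bigr)^{-\nu}, \qquad R\geq r>0,\ \rho\geq 0,$$
applied with $R=s^{1/m}$ and $r=t^{1/m}$ (the equivalence between distance of balls and distance of centers costs only a harmless multiplicative constant). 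This rescaling costs a factor $(s/t)^{\nu/m}$, which combined with the Lemma~\ref{schur-lemma} gain $(t/s)^{\eps_2}$ leaves $(s/t)^{\nu/m-\eps_2}$, integrable on $[t,1]$ precisely because $\eps_2>\nu/m$.

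The only delicate step is this rescaling argument in the regime $s>t$: the two-sided hypothesis $\eps_1>0$ and $\eps_2>\nu/m$ on $\tilde{\psi}_t$ is exactly what is needed so that both $s$-integrations converge, with the large-$s$ gain from Lemma~\ref{schur-lemma} strictly dominating the loss incurred in changing scales. Summing the two contributions produces the claimed off-diagonal estimate of order $\nu$ at scale $t^{1/m}$.
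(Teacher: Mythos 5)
Your proposal is correct and follows essentially the same route as the paper: split the $s$-integral at $s=t$, use $\|\gamma_s\|_\infty\lesssim 1$ and the support condition on $\psi_s$ to invoke Lemma \ref{schur-lemma}, and absorb the scale change from $s^{1/m}$ to $t^{1/m}$ in the regime $s>t$ at the cost of $(s/t)^{\nu/m}$, which the gain $(t/s)^{\eps_2}$ dominates since $\eps_2>\nu/m$. Your explicit rescaling inequality $\bigl(1+\rho/R\bigr)^{-\nu}\leq (R/r)^{\nu}\bigl(1+\rho/r\bigr)^{-\nu}$ merely packages in one step what the paper does by treating $d(B_1,B_2)$ small and large separately.
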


\begin{proof}
Fix $t\in(0,1)$, and choose two balls $B_1,B_2$ in $X$.  Let $f \in L^p(X)$ with support in $B_1$. Using the uniform boundedness of $(\gamma_s)$, we have
\begin{align*}
	\norm{T_{\sigma} \tilde{\psi}_t(L)f}_{L^p(B_2)}
			& = \norm{ \int_0^1 \gamma_s(\cdot) \psi_s(L) \tilde{\psi}_t(L)f \,\frac{ds}{s}}_{L^p(B_2)}\\
			& \lesssim \int_0^t \norm{\psi_s(L)\tilde{\psi}_t(L)f}_{L^p(B_2)} \frac{ds}{s}
				+\int_t^1 \norm{\psi_s(L)\tilde{\psi}_t(L)f}_{L^p(B_2)} \frac{ds}{s}.
\end{align*}
Due to its support condition, $\psi_s$ satisfies \eqref{schur-lemma-cond1}. Thus, the application of Lemma \ref{schur-lemma} yields that the above is bounded by a constant times
\begin{align*}
	\int_0^t \left(1+\frac{d(B_1,B_2)}{t^{1/m}}\right)^{-\nu} \left(\frac{s}{t}\right)^{\eps_1}\,\frac{ds}{s} \norm{f}_{L^p(B_1)}
	+ \int_t^1 \left(1+\frac{d(B_1,B_2)}{s^{1/m}}\right)^{-\nu} \left(\frac{t}{s}\right)^{\eps_2}\,\frac{ds}{s} \norm{f}_{L^p(B_1)}.
\end{align*}
This gives the estimate for $d(B_1,B_2)<t$. For $d(B_1,B_2)>t$, one can estimate the integral over $(t,1)$ against
\begin{align*}
	\int_t^1 \left(\frac{d(B_1,B_2)}{t^{1/m}}\right)^{-\nu} \left(\frac{t}{s}\right)^{\eps_2-\frac{\nu}{m}} \,\frac{ds}{s} \norm{f}_{L^p(B_1)}.
\end{align*}
Since we assumed $\eps_2>\frac{\nu}{m}$, we finally obtain
\begin{align*}
	\norm{T_{\sigma} \tilde{\psi}_t(L)f}_{L^p(B_2)}
				& \lesssim \left(1+\frac{d(B_1,B_2)}{t^{1/m}}\right)^{-\nu}\norm{f}_{L^p(B_1)}.
\end{align*}
\end{proof}

\begin{lem}[Extrapolation lemma] \label{lemma:extrapolation}
Let $T$ be a linear operator, bounded on $L^2$. If for every $t>0$ and some function $\psi_t$ satisfying (\ref{m-cond1}), $T \psi_t(L)$ satisfies $L^2-L^2$ off-diagonal estimates at the scale $t^{\frac{1}{m}}$ ; then $T$ is bounded on $L^p$ for every $p\in(p_0,2]$.
\end{lem}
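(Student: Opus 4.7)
The plan is to follow the Blunck--Kunstmann extrapolation scheme: I will prove that $T$ is of weak type $(p_0, p_0)$, and then Marcinkiewicz interpolation with the $L^2$-boundedness hypothesis yields $L^p$-boundedness for every $p \in (p_0, 2]$.

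Given $f \in L^{p_0}(X)$ and $\alpha > 0$, I perform a Calder\'on--Zygmund decomposition of $|f|^{p_0}$ at level $\alpha^{p_0}$, yielding $f = g + \sum_i b_i$ with $\|g\|_\infty \lesssim \alpha$, $\supp b_i \subseteq B_i = B(x_i, r_i)$, $\aver{B_i}|b_i|^{p_0} \lesssim \alpha^{p_0}$, bounded overlap of the $B_i$, and $\sum_i \mu(B_i) \lesssim \alpha^{-p_0}\|f\|_{p_0}^{p_0}$. The good part is handled by the interpolation bound $\|g\|_2^2 \lesssim \alpha^{2-p_0}\|f\|_{p_0}^{p_0}$ combined with Chebyshev and the $L^2$-boundedness of $T$, giving $\mu\{|Tg|>\alpha/2\} \lesssim \alpha^{-p_0}\|f\|_{p_0}^{p_0}$.

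For the bad part, set $\Omega := \bigcup_i 4B_i$ (of measure $\lesssim \alpha^{-p_0}\|f\|_{p_0}^{p_0}$) and estimate the contribution on $X \setminus \Omega$. For each $i$, let $t_i := r_i^m$ and split
$$Tb_i = T\psi_{t_i}(L) b_i + T(I - \psi_{t_i}(L)) b_i.$$
The first piece is the direct target of the hypothesis: the $L^2$--$L^2$ off-diagonal decay of $T\psi_{t_i}(L)$ at scale $r_i$ yields, on each annulus $S_j(B_i)$ with $j \geq 2$, the estimate $\|T\psi_{t_i}(L) b_i\|_{L^2(S_j(B_i))} \lesssim (1+2^j)^{-\nu}\alpha\mu(B_i)^{1/2}$; squaring and summing with bounded overlap, then applying Chebyshev on $X \setminus \Omega$, deliver the required $\alpha^{-p_0}\|f\|_{p_0}^{p_0}$ bound.

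The hard part is the second piece $T(I - \psi_{t_i}(L))b_i$, since the hypothesis does not directly govern $I - \psi_t(L)$. I exploit the flexibility within the class (\ref{m-cond1}) by taking $\psi_t(\xi) = (1 - e^{-t\xi})^N$ for $N$ sufficiently large, so that $I - \psi_{t_i}(L) = \sum_{k=1}^N \binom{N}{k}(-1)^{k+1} e^{-k t_i L}$ is a finite linear combination of semigroup operators. By Assumption \ref{ass-Lq-offidag}, each $e^{-k t_i L}$ admits $L^{p_0}$--$L^{p_0'}$ (hence $L^{p_0}$--$L^2$) off-diagonal decay at scale $r_i$; combined via duality with the $L^2$-boundedness of $T$ (testing against $L^2$ functions supported in $S_j(B_i)$ and passing to the adjoint $T^*$), this produces annular estimates of the same form $(1+2^j)^{-\nu}\alpha\mu(B_i)^{1/2}$ for the second piece, completing the weak-type bound.
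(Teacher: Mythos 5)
Your overall strategy --- Calder\'on--Zygmund decomposition of $|f|^{p_0}$, weak type $(p_0,p_0)$, then Marcinkiewicz interpolation with the $L^2$ bound --- is indeed the Blunck--Kunstmann scheme that underlies this lemma. The paper, however, does not rerun that machinery: it reduces to it in two steps, first transferring the assumed off-diagonal estimates from the given family $\psi_t$ to $T\phi_t(L)$ for an \emph{arbitrary} $\phi_t$ decaying at $0$ (citing \cite[Lemma 4.12]{FreyKunstmann} or \cite{B}), in particular $\phi_t(\xi)=1-e^{-t\xi}$, and then invoking the extrapolation theorem of \cite{Bernicot} and \cite[Theorem 5.11]{BJ}. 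Your proposal skips the transfer step entirely: you simply \emph{choose} $\psi_t(\xi)=(1-e^{-t\xi})^N$, but the hypothesis only grants off-diagonal estimates for \emph{some} family $\psi_t$, and in the application the estimates are supplied by Proposition \ref{prop:can} for functions decaying at infinity, which $(1-e^{-t\xi})^N$ does not. This reduction from the given $\psi_t$ to $1-e^{-t\cdot}$ is precisely the content of the paper's proof and cannot be dispensed with.

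The more serious gap is your treatment of the second piece $T(I-\psi_{t_i}(L))b_i=\sum_k c_k\,Te^{-kt_iL}b_i$. You claim that the $L^{p_0}$--$L^2$ off-diagonal decay of $e^{-kt_iL}$, ``combined via duality with the $L^2$-boundedness of $T$,'' yields annular estimates $\|Te^{-kt_iL}b_i\|_{L^2(S_j(B_i))}\lesssim 2^{-j\nu}\alpha\,\mu(B_i)^{1/2}$. This is false: composing an $L^2$-bounded operator with an operator having off-diagonal decay does not produce off-diagonal decay ($T^*$ of a function supported in $S_j(B_i)$ is supported nowhere in particular, so the pairing with $e^{-kt_iL}b_i$ sees no separation). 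The correct argument never claims decay for $Te^{-kt_iL}b_i$; instead one proves the global bound $\bigl\|\sum_i e^{-kt_iL}b_i\bigr\|_{L^2(X)}^2\lesssim\alpha^{2-p_0}\|f\|_{p_0}^{p_0}$ by dualizing against $u\in L^2$, using the annular decay of $e^{-kt_iL}b_i$ together with the maximal function and a Kolmogorov-type estimate, and only then applies the $L^2$ boundedness of $T$ to the whole sum followed by Chebyshev. A related repair is needed for your first piece: the hypothesis gives $L^2$--$L^2$, not $L^{p_0}$--$L^2$, off-diagonal estimates for $T\psi_{t_i}(L)$, whereas the Calder\'on--Zygmund decomposition only controls $\|b_i\|_{L^{p_0}}$; a semigroup factor mapping $L^{p_0}(B_i)$ into $L^2$ (via Assumption \ref{ass-Lq-offidag}) must be inserted first. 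With these corrections you would essentially be reproving \cite[Theorem 5.11]{BJ}, which the paper instead cites.
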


\begin{proof} We just sketch the proof. Using the assumed off-diagonal estimates, we still have off-diagonal estimates with the $\psi_t$ function replaced by any $\phi_t$ function, satisfying only decay at $0$ (see \cite[Lemma 4.12]{FreyKunstmann} or \cite[Corollary 3.6]{B} in a specific case). Then applying with the particular function $\phi_t(\cdot):= 1-e^{-t \cdot}$, we can apply the extrapolation result of \cite{Bernicot},\cite[Theorem 5.11]{BJ} and obtain $L^p$-boundedness of the operator $T$.
\end{proof}

\begin{rem} Using \cite[Corollary 4.13]{FreyKunstmann}, we also have that such an operator (as in the previous lemma) satisfies boundedness on Hardy space $H^p_{L}$ into $L^p$ for $p\in[1,2)$. We regain the previous lemma, since for $p\in(p_0,2)$, the Hardy space $H^p_L$ is shown to be equal to $L^p$.
  
Moreover from \cite[Theorem 6.4]{BJ}, we also know that such operators satisfy weighted boundedness: an operator $T$ has in lemma \ref{lemma:extrapolation} is bounded on $L^p(\omega)$ for every $p\in(p_0,2)$ and $\omega \in {\mathbb A}_{\frac{p}{p_0}} \cap RH_{\left(\frac{2}{p}\right)'}$.
\end{rem}

\section{Symbols in $S^0_{1,0}$}

\begin{thm} \label{thm:s10} Assume that the ambiant space satisfies the Sobolev inequality: Assumption \ref{ass:sobolev}.
Let $p\in(p_0,p_0')$. Then, every symbol $\sigma\in S^0_{1,0}$ gives rise to a bounded operator on $L^p(X)$. More precisely, such an operator satisfies $L^p$ off-diagonal estimates of arbitrary order at the scale $1$.
\end{thm}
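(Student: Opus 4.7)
The plan is to use Lemma \ref{lem:decom} to split $\sigma = \tau + \sum_{l\in\Z}\sigma_l$, where $\tau(x,\xi)=\phi(\xi)\sigma(x,\xi)$ is compactly supported in $\xi\in[0,2]$ and each elementary symbol $\sigma_l$ satisfies $\|\sigma_l\|_{S^0_{1,0}} \lesssim (1+|l|)^{-M}$ for every $M>0$. I will then handle the two contributions separately. The decisive observation for the elementary part is that each $\sigma_l$ vanishes identically on $[0,1)$: in the representation $\sigma_l(x,\xi)=\int_0^1 \gamma_{l,t}(x)\tilde\psi(t\xi)e^{2i\pi lt\xi}\frac{dt}{t}$, the factor $\tilde\psi(t\xi)$ forces $t\xi\in[1,2]$, which is incompatible with $t\in(0,1]$ when $\xi<1$. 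Consequently $\sigma_l$ trivially satisfies the hypothesis \eqref{m-cond1-bis} of Lemma \ref{lemma2} at scale $r=1$ for arbitrarily large $N$, and Lemma \ref{lemma2} yields $L^p$ off-diagonal estimates of any order $\nu>n|1/p-1/2|$ at scale $1$ for $T_{\sigma_l}$, with implicit constant polynomially decaying in $|l|$. The polynomial $|l|$-loss from differentiating $e^{2i\pi lt\xi}$ in $\xi$ is compensated by the rapid decay $\|\gamma_{l,t}\|_\infty \lesssim (1+|l|)^{-M}$ given in the proof of Lemma \ref{lem:decom}, so choosing $M$ sufficiently large, summation over $l$ preserves the off-diagonal estimate.

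For the low-frequency piece $\tau$, I would again run the Sobolev-embedding freezing argument of Lemma \ref{lemma2} to reduce matters to uniform (in $y$) off-diagonal estimates for the spectral multipliers $F_{y,j}(\xi) := \phi(\xi)\Delta_y^j\sigma(y,\xi)$, each smooth, bounded, and supported in $\xi\in[0,2]$. These multipliers do not vanish at $\xi=0$, so Lemma \ref{lemma1} does not directly apply. To recover off-diagonal decay of arbitrary order at scale $1$, I would factor $F_{y,j}(\xi)=G_{y,j}(\xi)(1+\xi)^{-K}$ with $G_{y,j}(\xi):=(1+\xi)^K F_{y,j}(\xi)$ still smooth and compactly supported in $\xi\in[0,2]$, so that $F_{y,j}(L)=G_{y,j}(L)(I+L)^{-K}$, and use the $L^p$ off-diagonal decay of $(I+L)^{-K}$ at scale $1$ of arbitrary polynomial order when $K$ is large. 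The latter follows from the subordination formula $(I+L)^{-K} = \Gamma(K)^{-1}\int_0^\infty t^{K-1} e^{-t} e^{-tL}\,dt$ together with Assumption \ref{ass-Lq-offidag} and Davies--Gaffney extrapolation to $L^p$--$L^p$ bounds. A Schur-type near/far decomposition of $(I+L)^{-K}f$ relative to $B_2$, combined with the uniform $L^p$-boundedness of the middle factor $G_{y,j}(L)$ (with constant independent of $y,j$), then produces off-diagonal estimates of arbitrary order at scale $1$ for $F_{y,j}(L)$ and hence, by summing the Sobolev expansion, for $T_\tau$.

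The main obstacle I foresee is precisely the treatment of $\tau$. Lemma \ref{lemma2} cannot be invoked directly because its hypothesis \eqref{m-cond1-bis} demands that the symbol vanish at $\xi=0$, whereas $\tau(x,0)=\phi(0)\sigma(x,0)$ generically does not. The $(I+L)^{-K}$ absorption sketched above transfers the off-diagonal requirement onto the resolvent-type factor, whose decay is easy to quantify from the semigroup bounds; the delicate step is the composition with the bounded-but-not-off-diagonal middle multiplier $G_{y,j}(L)$, which is what forces the near/far Schur argument. Once off-diagonal estimates of arbitrary order at scale $1$ are established for both $T_\tau$ and $\sum_l T_{\sigma_l}$, they add to yield the same for $T_\sigma$, and global $L^p$-boundedness then follows from the off-diagonal estimates via Schur's lemma applied to a covering of $X$ by balls of radius $1$.
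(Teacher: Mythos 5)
Your handling of the high-frequency part is correct, and it is a legitimate variant of the paper's argument: the paper does not pass through the elementary-symbol decomposition for $S^0_{1,0}$, but instead applies Lemma \ref{lemma2} directly to the symbols $\sigma(x,\xi)\psi(t\xi)$ at scale $t^{1/m}$ and then integrates over $t\in(0,1)$, paying a factor $t^{-n/m}$ when converting off-diagonal bounds at scale $t^{1/m}$ into bounds between balls of radius $1$. Your route --- applying Lemma \ref{lemma2} at scale $r=1$ to each elementary symbol $\sigma_l$, which indeed vanishes for $\xi<1$ so that \eqref{m-cond1-bis} holds trivially there --- works equally well, provided (as you note) the polynomial loss in $l$ from $\partial_\xi^\beta e^{2i\pi lt\xi}$ is absorbed by the rapid decay of $\gamma_{l,t}$.

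The gap is in your treatment of the low-frequency piece $\tau=\sigma\phi$. You are right that \eqref{m-cond1-bis} fails at $\xi=0$ (the paper is itself too quick on this point), but the repair you propose does not close. Writing $F_{y,j}(L)=G_{y,j}(L)(I+L)^{-K}$, with $G_{y,j}(L)$ merely $L^p$-bounded and $(I+L)^{-K}$ off-diagonally decaying at scale $1$, cannot yield off-diagonal decay for the composition: $L^p$-boundedness of $G_{y,j}(L)$ carries no spatial localization, so the portion of $(I+L)^{-K}f$ living near $B_1$ --- which has full size $\simeq\|f\|_{L^p(B_1)}$ --- may be transported by $G_{y,j}(L)$ onto $B_2$ with no gain in $d(B_1,B_2)$. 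Your near/far splitting only controls the piece of $(I+L)^{-K}f$ near $B_2$ (which is indeed small); the far piece is not small in $L^p$ and nothing beats it down. The same obstruction shows why the vanishing at $\xi=0$ in \eqref{m-cond1} cannot be dispensed with by a dyadic decomposition of $\phi$: the spectral pieces near $2^{-j}$ are localized only at scale $2^{j/m}\gg 1$, and without a decaying factor their contributions at fixed distance do not sum. What actually rescues the low-frequency part is not a composition trick but the representation $F(\xi)=\bigl(\int \hat H(\tau)e^{2\pi i\tau\xi}\,d\tau\bigr)e^{-\xi}$ with $H(\xi):=F(\xi)e^{\xi}$ smooth and compactly supported, combined with off-diagonal bounds for the complex-time semigroup $e^{-(1-2\pi i\tau)L}$ with polynomial loss in $\tau$ (Assumption \ref{ass-Lq-offidag} plus Phragm\'en--Lindel\"of, i.e.\ the mechanism behind \cite{KunstmannUhl}, Lemmas 4.9--4.10, used at scale $1$); the rapid decay of $\hat H$ then gives off-diagonal decay of arbitrary order at scale $1$ for $F(L)$ with no hypothesis on $F$ at $\xi=0$. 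With that substitute in place for $\tau$, the remainder of your argument (summation over $l$, and the covering argument deriving global $L^p$-boundedness from scale-$1$ off-diagonal decay) is sound.
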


\begin{proof}
Consider a smooth symbol $\sigma$, belonging to the class $S^0_{1,0}$. We want to check that the operator $\sigma(x,L)$ satisfies off-diagonal estimates at the scale $1$. Let us fix  two balls $B_1,B_2$ of radius $1$. \\
If $d(B_1,B_2) \leq 10$, then we only use $L^p$ boundedness. Indeed, we know from spectral multiplier theory (see e.g. \cite{KunstmannUhl}, Theorem 5.4 b)) that every bounded function $F$ gives rise to a $L^p$-bounded linear operator, which is in particular bounded from $L^p(B_1)$ to $L^p(B_2)$. Then applying similar arguments as in Lemma \ref{lemma2} (freezing Coifman-Meyer argument), we can extend this boundedness to every operator coming from a smooth symbol $\tau\in W^{s,\infty}_x(L^\infty)$ for a large enough $s>0$. Obviously, $S^0_{1,0} \subset W^{s,\infty}_x(L^\infty)$, so we conclude that our operator $\sigma(x,L)$ is bounded from $L^p(B_1)$ to $L^p(B_2)$ with a norm
$$ \|\sigma(x,L)\|_{L^p(B_1) \to L^p(B_2)} \lesssim 1 \simeq \left(1+d(B_1,B_2)\right)^{-M}$$
since $d(B_1,B_2)\leq 10$. \\
So let us now focus on the main interesting case: $d(B_1,B_2)\geq 10$. Choose a smooth function $\psi$ (supported on $[1,2]$) and another one $\phi$ supported on $[0,2]$ such that for every $\xi\in (0,\infty)$
$$ 1 = \int_0^1 \psi(t\xi) \frac{dt}{t} + \phi(\xi).$$
We also split the symbol $\sigma$ with
$$ \sigma(x,\xi) = \int_0^1  \sigma(x,\xi) \psi(t\xi) \frac{dt}{t} + \sigma(x,\xi)\phi(\xi) = \sigma_1(x,\xi)+ \sigma(x,\xi)\phi(\xi).$$
The symbol $\sigma(x,\xi)\phi(\xi)$ satisfies \eqref{m-cond1-bis} with $r=1$. Thus, Lemma \ref{lemma2} yields that the second operator already satisfies  off-diagonal estimates (of arbitrary order) at the scale $1$.
 Moreover, for $t\in(0,1)$, the operator $\sigma(x,L) \psi(tL)$ satisfies off-diagonal estimates of arbitrary order $\nu>0$ at the scale $t^{1/m}$ (since the symbol verifies (\ref{m-cond1-bis}) and Lemma \ref{lemma2}).
This also implies that (see e.g. \cite{FreyKunstmann}, Remark 3.8)
\begin{align*}
	\norm{\sigma(x,L) \psi_t(L)f}_{L^p(B_2)}
		\lesssim \left(\frac{1}{t}\right)^{n/m} \left(1+\frac{d(B_1,B_2)}{t^{1/m}}\right)^{-\nu} \norm{f}_{L^p(B_1)}.
\end{align*}
Hence, for $d(B_1,B_2)\geq 10$ and $\nu>n$,
\begin{align*} 
\left\|\sigma(x,L)(f)\right\|_{L^p(B_2)} 
& \lesssim   \| f \|_{L^p(B_1)} \left( \int_{0}^{1} \left(1+\frac{d(B_1,B_2)}{t^{1/m}}\right)^{-\nu} \left(\frac{1}{t}\right)^{n/m} \frac{dt}{t}\right)  \nonumber \\
& \lesssim \left(1+d(B_1,B_2)\right)^{-\nu}  \|f\|_{L^p(B_1)}. \label{eq:sollu}
\end{align*}
As a consequence, it follows that $\sigma(x,L)$ admits off-diagonal decay at the scale $1$, and it is well-known that such operators are globally bounded on $L^p(X)$, if the order of the off-diagonal estimates is large enough (by splitting the whole space with an almost-disjoint covering by balls of radius $1$).
\end{proof}

For $s>0$ and $p\in(p_0,p_0')$, we may define the Sobolev space $W^{s,p}_{L}$ by the Bessel potential: $W^{s,p}_{L}$ is the closure of
$$ \{ f\in {\mathcal D}(L^s),\ \|f\|_{W^{m,p}_L}:=\|(1+L)^{\frac{s}{m}} f\|_{L^p} <\infty\}$$
for the corresponding norm. Similarly for operator $\Delta$. This is the classical way to define Sobolev spaces adpated to a semigroup of operators, see \cite[Section 8.4]{HofmannMayborodaMcIntosh}, \cite{BBR} for some properties ....).

\begin{cor} \label{cor1} Assume that the ambiant space satisfies the Sobolev inequality: Assumption \ref{ass:sobolev}. 
Let $p\in(p_0,p_0')$ and a symbol $\sigma\in S^0_{1,0}$. Then for $s>0$, $\sigma(x,L)$ is bounded from $ W^{s,p}_{L}$ to $W^{s,p}_{\Delta}$.
\end{cor}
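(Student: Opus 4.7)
By the definition of the Bessel-potential Sobolev norms, the claim is equivalent to the $L^p$-boundedness of the composed operator
\[
\mathcal{T}_s \;:=\; (I+\Delta)^{s/2}\,\sigma(x,L)\,(I+L)^{-s/m};
\]
indeed, setting $g=(I+L)^{s/m}f$ reformulates the estimate as $\|\mathcal{T}_s g\|_{L^p}\lesssim\|g\|_{L^p}$. The plan is therefore to exhibit $\mathcal{T}_s$ as a pseudo-differential operator to which Theorem \ref{thm:s10} can be applied.

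Because $(I+L)^{-s/m}$ is a spectral multiplier of $L$, the composition $\sigma(x,L)\circ(I+L)^{-s/m}$ coincides with the pseudo-differential operator associated with the modified symbol
\[
\tilde\sigma(x,\xi)\;:=\;\sigma(x,\xi)\,(1+\xi)^{-s/m},
\]
which a direct check places in $S^0_{1,0}$ with the additional decay $|\partial_\xi^\beta\Delta_x^\alpha\tilde\sigma(x,\xi)|\lesssim(1+|\xi|)^{-s/m-\beta}$. The crux is then to show that left-composition by $(I+\Delta)^{s/2}$ preserves $L^p$-boundedness on this type of negative-order operator. I would treat first the case $s=2k$ with $k\in\mathbb{N}$, expanding $(I+\Delta)^k=\sum_{j=0}^k\binom{k}{j}\Delta^j$, which reduces the problem to $L^p$-boundedness of each $\Delta^j\tilde\sigma(x,L)$, $0\le j\le k$. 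The case $j=0$ follows directly from Theorem \ref{thm:s10}. For $1\le j\le k$, I would decompose $\tilde\sigma$ via Lemma \ref{lem:decom} into elementary symbols of the form $\tilde\sigma_l(x,\xi)=\int_0^1\gamma_{t,l}(x)\psi_{t,l}(\xi)\frac{dt}{t}$, noting that the extra $\xi$-decay of $\tilde\sigma$ produces a gain $t^{s/m}$ in the size of each $\psi_{t,l}$. Combining the off-diagonal estimates of Proposition \ref{prop:can} with the Sobolev embedding of Assumption \ref{ass:sobolev}, along the same lines as in Lemma \ref{lemma2}, would then yield the required $L^p$-bound for each elementary piece, and the rapid decay of $\|\tilde\sigma_l\|$ in $l$ from Lemma \ref{lem:decom} would allow one to sum the contributions. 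The case of arbitrary real $s>0$ would then follow by Stein's complex interpolation applied to the analytic family $z\mapsto\mathcal{T}_z$.

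The main obstacle is the interaction between $\Delta^j$ and $\tilde\sigma_l(x,L)$: in the present abstract setting, $\Delta$ is not a differential operator satisfying a Leibniz rule, so one cannot distribute $\Delta^j$ across the product $\gamma_{t,l}(x)\cdot\psi_{t,l}(L)f(x)$ in any direct way. This is precisely where Assumption \ref{ass:sobolev} plays its role: it transfers information between pointwise bounds on balls and averages of the form $\aver{2^iB}|(I+t\Delta)^{M_0}h|\,d\mu$, providing a substitute for the missing symbolic calculus. Applied in the spirit of the freezing-coefficients argument of Lemma \ref{lemma2}, it reduces $\Delta^j\tilde\sigma_l(x,L)$ locally to a $\Delta^j$-derivative of the multiplier $\tilde\sigma_l(y,L)$ with frozen coefficient $y$, which is then controlled by Lemma \ref{lemma1}, the $t^{s/m}$-gain absorbing the singularity produced by $\Delta^j$.
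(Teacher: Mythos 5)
Your opening reduction is exactly the paper's: both pass to the operator $(I+\Delta)^{s/2}\sigma(x,L)(I+L)^{-s/m}$ and absorb the right-hand factor into the symbol, which is legitimate because for each fixed $x$ the composition $\sigma(x,L)(I+L)^{-s/m}$ is the spectral multiplier of the product symbol $\sigma(x,\xi)(1+\xi)^{-s/m}$. From there the paper finishes in one line: it declares the whole operator to be the pseudodifferential operator $T_\tau$ with $\tau(x,\xi)=(1+\Delta)^{s/2}\sigma(\cdot,\xi)[x]\,(1+\xi)^{-s/m}$, asserts (leaving the check to the reader) that $\tau\in S^0_{1,0}$, and invokes Theorem \ref{thm:s10}. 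You instead propose a binomial expansion of $(I+\Delta)^{k}$ for $s=2k$, a term-by-term bound on $\Delta^j T_{\tilde\sigma}$, and complex interpolation for general $s$.

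The problem is that the central step of your route is not actually carried out, and the mechanism you invoke would fail. You correctly identify the obstacle (no Leibniz rule for $\Delta$ in this abstract setting), but Assumption \ref{ass:sobolev} does not remove it: the Sobolev embedding is an inequality of the form $\|h\|_{L^\infty(B)}\lesssim\sum_i 2^{-i\kappa}\aver{2^iB}|(I+t\Delta)^{M_0}h|\,d\mu$, i.e.\ it controls sup-norms \emph{by} $\Delta$-derivatives; it gives no way to estimate $\Delta^j h$ for $h=\gamma_{t,l}\cdot\psi_{t,l}(L)g$, which is what the binomial expansion requires. The freezing argument of Lemma \ref{lemma2} works only because there the operator $(I+r^2\Delta_y)^{M_0}$ ever hits the symbol's first argument $y$ (producing, for frozen $y$, another spectral multiplier of $L$ to which Lemma \ref{lemma1} applies); in your step $\Delta^j$ acts on the evaluation point, so it hits both $\gamma_{t,l}$ and $\psi_{t,l}(L)g$, and since $\Delta$ and $L$ are a priori unrelated operators, $\Delta^j\psi_{t,l}(L)$ is not a multiplier of $L$ and Lemma \ref{lemma1} says nothing about it — in particular the claimed $t^{s/m}$ gain has nothing to absorb it against. (The same commutation issue is in fact hidden in the paper's identity $T=T_\tau$, so you have put your finger on the genuine difficulty of this corollary; but your proposal does not resolve it, and as written the proof of the key term $\Delta^jT_{\tilde\sigma}$ is missing.)
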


\begin{proof} The boundedness of $\sigma(x,L)$ from $W^{s,p}_{L}$ to $W^{s,p}_{\Delta}$ is equivalent to the $L^p$-boundedness of $T:= (1+\Delta)^{\frac{s}{2}} \sigma(x,L) (1+L)^{-\frac{s}{m}}$. We may rewrite $T=T_\tau$ as the pseudodifferential operator associated to the symbol
$$ \tau(x,\xi):= (1+\Delta)^{\frac{s}{2}} \sigma(x, \xi) (1+\xi)^{-\frac{s}{m}}.$$
Since $\sigma \in S^0_{1,0}$, we let it to the reader to check that $\tau \in S^0_{1,0}$. So the previous theorem implies the $L^p$-boundedness of $T$.
 \end{proof}

Similarly, we have 

\begin{cor} \label{cor1-bis} Assume that the ambiant space satisfies the Sobolev inequality: Assumption \ref{ass:sobolev}. 
Let $p\in(p_0,p_0')$ and a symbol $\sigma\in S^m_{1,0}$ for some $m>0$. Then for $s>0$, $\sigma(x,L)$ is bounded from $ W^{s+m,p}_{L}$ to $W^{s,p}_{\Delta}$.
\end{cor}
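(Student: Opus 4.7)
The plan is to mimic the argument of Corollary~\ref{cor1} line by line, using the Bessel potential $(1+L)^{-(s+m)/m}$ in place of $(1+L)^{-s/m}$ in order to absorb the additional $\xi$-growth carried by symbols of positive order $m$.

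First, using the definitions of the Sobolev norms via Bessel potentials, the desired mapping property $\sigma(x,L):W^{s+m,p}_L\to W^{s,p}_\Delta$ is equivalent to the $L^p$-boundedness of
$$ T \;:=\; (1+\Delta)^{s/2}\,\sigma(x,L)\,(1+L)^{-(s+m)/m}. $$
I would then reinterpret $T$ as the pseudodifferential operator $T_\tau$ with symbol
$$ \tau(x,\xi) \;:=\; (1+\Delta)^{s/2}[\sigma(\,\cdot\,,\xi)](x)\,(1+\xi)^{-(s+m)/m}. $$

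The key step is to verify that $\tau\in S^0_{1,0}$. For the $\xi$-derivatives, Leibniz's rule combines the symbol estimate $|\partial_\xi^\beta\sigma(x,\xi)|\lesssim (1+|\xi|)^{m/m-\beta}$ with $|\partial_\xi^k (1+\xi)^{-(s+m)/m}|\lesssim (1+|\xi|)^{-(s+m)/m-k}$ to give $|\partial_\xi^\beta\tau(x,\xi)|\lesssim (1+|\xi|)^{-s/m-\beta}$, which is at least as strong as the required $S^0_{1,0}$ bound in $\xi$. For the $x$-derivatives, $\Delta_x^\alpha$ commutes with the Bessel potential $(1+\Delta)^{s/2}$ (both being functions of $\Delta$ in the Borel functional calculus), so $\Delta_x^\alpha\tau$ has the same form with $\sigma$ replaced by $\Delta_x^\alpha\sigma$; since $\delta=0$, no extra $\xi$-growth appears. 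Once $\tau\in S^0_{1,0}$ is in hand, Theorem~\ref{thm:s10} directly yields the $L^p$-boundedness of $T_\tau=T$, which is exactly the sought conclusion.

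The main technical point -- and the one already left to the reader in the proof of Corollary~\ref{cor1} -- is to justify that applying $(1+\Delta)^{s/2}$ to $\sigma(\,\cdot\,,\xi)$ and to its $\Delta^\alpha$-iterates preserves the pointwise-in-$x$ bounds required by the definition of $S^0_{1,0}$. This is precisely where Assumption~\ref{ass:sobolev} enters: the Sobolev embedding associated with $\Delta$ transfers the natural $L^p$-continuity of the Bessel potential into uniform pointwise bounds, at the price of summing over a few dyadic enlargements of the reference ball and a few additional iterates of $\Delta$, exactly in the spirit of Lemma~\ref{lemma2}.
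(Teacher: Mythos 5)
Your argument is correct and is essentially the paper's own: the paper gives no separate proof of Corollary~\ref{cor1-bis} beyond ``Similarly, we have'', and your reduction to the $L^p$-boundedness of $(1+\Delta)^{s/2}\,\sigma(x,L)\,(1+L)^{-(s+m)/m}$, the verification that the conjugated symbol lies in $S^0_{1,0}$ (indeed in $S^{-s}_{1,0}$), and the appeal to Theorem~\ref{thm:s10} is exactly the intended adaptation of the proof of Corollary~\ref{cor1}. The point you flag about $(1+\Delta)^{s/2}$ preserving the pointwise symbol bounds is the same step the paper itself leaves to the reader, so you are at the same level of rigor as the original.
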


\section{Symbols in $S^0_{1,\delta}$ with $\delta<1$}

\begin{thm} \label{thm:s1delta} Assume that the ambiant space satisfies the Sobolev inequality Assumption \ref{ass:sobolev} with the weak Poincar\'e inequality Assumption \ref{ass:poincare}.
Every symbol $\sigma\in S^0_{1,\delta}$ with $\delta<1$ gives rise to a $L^2$-bounded operator, still satisfying off-diagonal estimates at the scale $1$.
\end{thm}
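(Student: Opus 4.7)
Following the strategy of Theorem \ref{thm:s10}, first apply Lemma \ref{lem:decom} to decompose $\sigma = \tau + \sum_{l\in\Z}\sigma_l$ with $\tau\in S^0_{1,0}$ (handled by Theorem \ref{thm:s10}) and each
\[\sigma_l(x,\xi) = \int_0^1 \gamma_{s,l}(x)\psi_{s,l}(\xi)\,\frac{ds}{s}\]
satisfying $\|\sigma_l\|_{S^0_{1,\delta}}\lesssim(1+|l|)^{-M}$. It then suffices to prove the conclusion for a single elementary symbol with $l$-summable constants, so the task reduces to controlling
\[T_{\sigma_l}(f)(x) = \int_0^1 \gamma_{s,l}(x)\,\psi_{s,l}(L)(f)(x)\,\frac{ds}{s}.\]
Since $\psi_{s,l}(L)$ is naturally adapted to the scale $s^{1/m}$, the plan, mimicking Theorem \ref{thm:s10}, is to establish off-diagonal estimates of arbitrary order for the $s$-integrand at scale $s^{1/m}$, with an extra factor in $s$ that is integrable against $(1/s)^{n/m}$ (the price paid when passing from scale $s^{1/m}$ to scale $1$).

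The core estimate refines Lemma \ref{lemma2} via a freezing argument over balls of radius $r=s^{1/m}$. For two such balls $B_1, B_2$ and $f$ supported in $B_1$, split $\gamma_{s,l}$ over $B_2$ into average plus oscillation, $\gamma_{s,l}(x) = \aver{B_2}\gamma_{s,l} + (\gamma_{s,l}(x) - \aver{B_2}\gamma_{s,l})$. The constant-coefficient piece is harmless: $(\aver{B_2}\gamma_{s,l})\psi_{s,l}(L)$ is a spectral multiplier supported at $\xi\sim s^{-1}$, to which Lemma \ref{lemma1} applies directly, giving off-diagonal estimates of arbitrary order at scale $s^{1/m}$. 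For the oscillation, the $S^0_{1,\delta}$ structure of $\sigma_l$ yields $\|\Delta^j\gamma_{s,l}\|_\infty\lesssim s^{-2\delta j/m}$; since $\delta<1$, the normalized quantities $(I+s^{2/m}\Delta)^M\gamma_{s,l}$ and $\Delta(I+s^{2/m}\Delta)^M\gamma_{s,l}$ are bounded in $L^\infty$ by $1$ and by $s^{-2\delta/m}$ respectively. Applying Assumption \ref{ass:poincare} with $r_B=s^{1/m}$, so that its parameter equals $s^{2/m}$ and thus matches the $L$-scale, yields the crucial pointwise bound
\[\|\gamma_{s,l}-\aver{B_2}\gamma_{s,l}\|_{L^\infty(2B_2)}\lesssim s^{1/m}\cdot s^{-\delta/m}=s^{(1-\delta)/m}.\]
Combined with Lemma \ref{lemma1} applied to $\psi_{s,l}(L)$ and H\"older's inequality, the oscillatory operator then satisfies off-diagonal estimates at scale $s^{1/m}$ of order $\nu$ with an extra gain factor $s^{(1-\delta)/m}$.

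Integrating over $s\in(0,1)$ against $(1/s)^{n/m}\cdot s^{(1-\delta)/m}(1+d(B_1,B_2)/s^{1/m})^{-\nu}$, which is integrable near $s=0$ precisely because $\delta<1$ (and $\nu$ may be chosen arbitrarily large), produces off-diagonal estimates at scale $1$ of arbitrary order for $T_{\sigma_l}$, with constants summable in $l$. $L^2$-boundedness then follows by covering $X$ by an almost-disjoint family of unit balls and invoking a Schur-type argument, exactly as at the end of the proof of Theorem \ref{thm:s10}. The main technical hurdle is the extraction of the gain factor $s^{(1-\delta)/m}$ from Assumption \ref{ass:poincare} and its careful coupling with the off-diagonal estimates of Lemma \ref{lemma1}: this requires a delicate matching between the $\Delta$-scale (order $2$) and the $L$-scale (order $m$), and it is precisely where the weak Poincar\'e inequality intervenes, replacing the plain Sobolev embedding that was sufficient in Theorem \ref{thm:s10}.
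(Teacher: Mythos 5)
Your overall architecture (reduction to elementary symbols via Lemma \ref{lem:decom}, splitting the coefficient into average plus oscillation at scale $s^{1/m}$, and extracting the gain $s^{(1-\delta)/m}$ from Assumption \ref{ass:poincare} for the oscillation) matches the paper's, and your treatment of the oscillatory part and of the far-apart balls is essentially sound. But there is a genuine gap in your treatment of the averaged piece, and it is exactly the piece you dismiss as ``harmless''. For the global $L^2$-bound you ultimately need to control $\int_0^1 \bigl(\aver{B}\gamma_{s,l}\bigr)\,\psi_{s,l}(L)f\,\frac{ds}{s}$ on near-diagonal blocks, where the off-diagonal decay gives nothing. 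Lemma \ref{lemma1} only yields $\|\psi_{s,l}(L)f\|_{L^2}\lesssim\|f\|_{L^2}$ uniformly in $s$, with no gain in $s$, and $\int_0^1\frac{ds}{s}$ diverges; your final integration scheme only carries the factor $s^{(1-\delta)/m}$ coming from the oscillation, so the average part is simply not summable scale by scale. You cannot repair this by freezing the whole integrated symbol $\xi\mapsto\int_0^1(\aver{B}\gamma_{s,l})\psi_{s,l}(\xi)\frac{ds}{s}$ into a single Mikhlin multiplier either, because your gain on the oscillation requires the average to be taken over a ball of radius $s^{1/m}$ (hence $s$-dependent), while the single-multiplier trick requires an $s$-independent average; taking the average over a unit ball destroys the gain ($r_B\,s^{-\delta/m}=s^{-\delta/m}$ with $r_B=1$). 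This tension is precisely why the $S^0_{1,0}$ argument of Theorem \ref{thm:s10} does not carry over when $\delta>0$.

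The paper resolves this by passing to the adjoint, $\sigma(x,L)^*f=\int_0^1\overline{\psi_t}(L)[\overline{\gamma_t}f]\frac{dt}{t}$, and invoking orthogonality: the spectral theorem gives the quadratic estimate $\|\psi_t(L)g\|_{L^2(X\times(0,1),\,d\mu\,dt/t)}\lesssim\|g\|_{L^2}$, so the averaged contributions are summed in $\ell^2$ over the scales rather than in $\ell^1$, and no gain in $t$ is needed for that piece; the Poincar\'e gain $t^{(1-\delta)/m}$ is then only needed for the oscillation, where (as in your argument) it makes the $dt/t$ integral converge. To complete your proof you would need to insert such a square-function (or Cotlar--Stein almost-orthogonality) argument for the averaged part; also note that on the adjoint side the multiplication by $\gamma_t$ sits inside $\psi_t(L)$, which is why the paper must chain Assumption \ref{ass:poincare} along a geodesic of $t^{1/m}$-balls to reach distant cubes (estimate \eqref{eq:lip}), a step your purely local oscillation bound on $2B_2$ avoids but which becomes necessary once you adopt the adjoint formulation.
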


\begin{proof}
According to Lemma \ref{lem:decom}, it suffices to prove Theorem \ref{thm:s1delta} for elementary symbols only. So let us just consider a symbol of the following form:
$$ \sigma(x,\xi) = \int_0^1 \gamma_{t}(x) \psi_{t}(\xi) \frac{dt}{t}$$
with smooth functions $\psi_{t}$ supported around $t^{-1}$ and $\gamma_t$ satisfying
\begin{align*}
 \left| \left(\Delta^j \gamma_{t}\right) \left( \partial_\xi^k \psi_{t} \right) \right| \lesssim n^{-M} t^{k-\frac{2}{m} \delta j},
\end{align*}
for sufficiently large indices $j,k$.
We will check that the adjoint operator $\sigma(x,L)^*$ is $L^2$-bounded, which is equivalent.
The adjoint operator is given by 
\begin{align}
T(f) := \sigma(x,L)^*(f) = \int_0^1 \overline{\psi_t}(L)[\overline{\gamma_t(.)} f] \frac{dt}{t}.
\end{align}
By orthogonality, we have
$$ \|T(f)\|_{L^2} \lesssim \| \overline{\psi_t}(L)[\overline{\gamma_t} f] \|_{L^2(\frac{d\mu dt}{t})}.$$
To each $t\in(0,1)$, let us fix an almost-disjoint covering $(Q_l^t)_l$ of $X$ by balls of radius $t^{\frac{1}{m}}$ and write for $x\in Q_l^t$
\begin{align} \label{eq:s1delta}
\overline{\psi_t}(L)[\overline{\gamma_t} f](x) = \overline{\psi_t}(L) \left[\left(\overline{\gamma_t} - \aver{Q_l^t}\overline{\gamma_t} \right) f \right](x) + \left(\aver{Q_l^t} \overline{\gamma_t} \right) \overline{\psi_t}(L)(f)(x).
\end{align}
The second quantity in \eqref{eq:s1delta} is estimated as follows (using the boundedness of $\gamma_t$):
$$ \left\| \| \left(\aver{Q_l^t} \overline{\gamma_t} \right) \overline{\psi_t}(L)(f)(x)\|_{L^2(Q_l^t)} \right\|_{\ell^2(l)} \lesssim \| \overline{\psi_t}(L)(f) \|_{L^2}$$
which gives
$$ \left\| \left\| \left\|  \left(\aver{Q_l^t} \overline{\gamma_t} \right) \overline{\psi_t}(L)(f) \right\|_{L^2(Q_l^t)} \right\|_{\ell^2(l)} \right\|_{L^2(\frac{dt}{t})} \lesssim \|f\|_{L^2},$$
by functional calculus.
Using $L^2-L^2$ off-diagonal estimates at the scale $r:=t^{\frac{1}{m}}$, the first quantity in \eqref{eq:s1delta} is estimated by
\begin{align*}
\left\| \overline{\psi_t}(L) \left[ \left(\overline{\gamma_t} - \aver{Q_l^t} \overline{\gamma_t} \right) f \right] \right\|_{L^2(Q_l^t)} & \lesssim \sum_{k} \left(1+\frac{d(Q_l^t,Q_k^t)}{r}\right)^{-M} \left\|  \left(\overline{\gamma_t} - \aver{Q_l^t} \overline{\gamma_t} \right) f \right\|_{L^2(Q_k^t)} \\
& \lesssim \sum_{k} \left(1+\frac{d(Q_l^t,Q_k^t)}{r}\right)^{-M+1} t^{\frac{1-\delta}{m}}  \| f\|_{L^2(Q_k^t)},
\end{align*}
where we used that 
\begin{equation} \left\|\overline{\gamma_t} - \aver{Q_l^t} \overline{\gamma_t} \right\|_{L^\infty(Q_k^t)} \lesssim d(Q_l^t,Q_k^t) t^{-\frac{\delta}{m}}. \label{eq:lip} \end{equation}
Indeed, to check this last estimate, we cover the geodesic joining the center of $Q_l^t$ and the one of $Q_k^t$ by a collection of balls $(O_i)_{i=1...,A}$ (with $A \simeq \frac{d(Q_l^t,Q_k^t)}{r}$). Using Assumption \ref{ass:poincare}, we then obtain
\begin{align*}
 \left\|\overline{\gamma_t} - \aver{Q_l^t} \overline{\gamma_t} \right\|_{L^\infty(Q_k^t)} & = \left\|\overline{\gamma_t} - \aver{O_1} \overline{\gamma_t} \right\|_{L^\infty(O_A)} \\
 & \leq \sum_{i=1}^A \left\|\overline{\gamma_t} - \aver{O_i}\overline{\gamma_t} \right\|_{L^\infty(O_{i+1})} \\
 & \lesssim \sum_{i=1}^A r (1+r^{2M(1-\delta)})^M r^{-\delta} \lesssim d(Q_l^t,Q_k^t) t^{-\frac{1}{m}(\delta)}.
\end{align*}
Then using the homogeneous type, we can sum over $l$ and we get 
$$ \left\| \| \overline{\psi_t}(L)[(\overline{\gamma_t} - \aver{Q_l^t}\overline{\gamma_t}) f]\|_{L^2(Q_l^t)} \right\|_{\ell^2(l)} \lesssim t^{\frac{1-\delta}{m}}  \| f\|_{L^2},$$
which is then integrable for $t\in(0,1)$, since $\delta<1$.
We also conclude that $T$ is bounded on $L^2$, which by duality gives the $L^2$-boundedness of $T^*=\sigma(x,L)$. \\
Having obtained this global boundedness, it remains to check the local off-diagonal estimates. Indeed, the proof of such inequalities (for two balls $Q_1,Q_2$ with $d(Q_1,Q_2) \geq 10$) in Theorem \ref{thm:s10} still holds in our current situation. We let it to the reader to check that the proof remains true: we loose some power of $t$ (when we use Sobolev estimate), but this is not a problem since this can be compensated by the fast off-diagonal decays.
\end{proof}

\begin{prop} \label{prop:dual}
Let $\sigma$ be a symbol $\sigma\in S^0_{1,\delta}$ with $\delta<1$. Then $ T:=[\sigma(x,L)]^*- \overline{\sigma}(x,L)$ is an operator bounded on $L^p$ for every $p\in (p_0,p_0')$.
\end{prop}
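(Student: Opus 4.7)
The plan is to mimic the $L^2$ argument of Theorem \ref{thm:s1delta}, but with $L^p$ off-diagonal estimates replacing $L^2$ orthogonality, and to exploit the fact that $T$ is essentially an integral of commutators, which is precisely what produces the crucial gain. First, invoking Lemma \ref{lem:decom}, it suffices to treat the case of an elementary symbol $\sigma(x,\xi) = \int_0^1 \gamma_t(x)\psi_t(\xi)\,\frac{dt}{t}$, after which we sum over the elementary pieces using the rapid decay $\|\sigma_l\|_{S^0_{1,\delta}} \lesssim (1+|l|)^{-M}$. A direct expansion yields
\[
 T(f)(x) = \int_0^1 T_t(f)(x)\,\frac{dt}{t}, \qquad T_t(f)(x) := \overline{\psi_t}(L)[\overline{\gamma_t}\, f](x) - \overline{\gamma_t(x)}\,\overline{\psi_t}(L)[f](x),
\]
so that $T_t$ is precisely the commutator of the spectral multiplier $\overline{\psi_t}(L)$ with multiplication by $\overline{\gamma_t}$.

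For each fixed $t \in (0,1)$, I would fix an almost-disjoint covering $(Q_l^t)_l$ of $X$ by balls of radius $t^{1/m}$ and split the integrand exactly as in the proof of Theorem \ref{thm:s1delta}: for $x \in Q_l^t$,
\[
 T_t(f)(x) = \overline{\psi_t}(L)\left[\left(\overline{\gamma_t} - \aver{Q_l^t}\overline{\gamma_t}\right)f\right](x) + \left(\aver{Q_l^t}\overline{\gamma_t} - \overline{\gamma_t(x)}\right)\overline{\psi_t}(L)(f)(x).
\]
For the first piece, the $L^p$ off-diagonal estimates for $\overline{\psi_t}(L)$ at scale $t^{1/m}$ (Lemma \ref{lemma1}, valid for $p\in(p_0,p_0')$) combined with the pointwise control $\|\overline{\gamma_t} - \aver{Q_l^t}\overline{\gamma_t}\|_{L^\infty(Q_k^t)} \lesssim t^{(1-\delta)/m}(1+d(Q_l^t,Q_k^t)/t^{1/m})$ deduced from (\ref{eq:lip}) yield, after a standard Schur-type convolution in $\ell^p(l)$ against a fast-decaying kernel, the bound $t^{(1-\delta)/m}\|f\|_{L^p}$. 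For the second piece, Assumption \ref{ass:poincare} applied at scale $t^{1/m}$ gives $|\aver{Q_l^t}\overline{\gamma_t} - \overline{\gamma_t(x)}| \lesssim t^{(1-\delta)/m}$ on $Q_l^t$, and the uniform $L^p$-boundedness of $\overline{\psi_t}(L)$ (again Lemma \ref{lemma1}) delivers the same $t^{(1-\delta)/m}\|f\|_{L^p}$ bound.

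Combining the two pieces gives $\|T_t(f)\|_{L^p} \lesssim t^{(1-\delta)/m}\|f\|_{L^p}$ uniformly in $t\in(0,1)$, after which Minkowski's integral inequality immediately yields
\[
 \|T(f)\|_{L^p} \leq \int_0^1 \|T_t(f)\|_{L^p}\,\frac{dt}{t} \lesssim \|f\|_{L^p}\int_0^1 t^{(1-\delta)/m}\,\frac{dt}{t} \lesssim \|f\|_{L^p},
\]
the last integral being finite precisely because $\delta<1$. The main obstacle I anticipate is tracking the $L^p$ off-diagonal behaviour of $\overline{\psi_t}(L)$ uniformly in $t$: in Theorem \ref{thm:s1delta} the corresponding $L^2$ estimate was a direct consequence of functional calculus, whereas here one must appeal to the spectral multiplier bound of Lemma \ref{lemma1}, and the restriction $p\in(p_0,p_0')$ together with the hypothesis $\nu>n\abs{\frac{1}{p}-\frac{1}{2}}$ must be monitored carefully. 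All remaining steps are then bookkeeping parallel to the proof of Theorem \ref{thm:s1delta}.
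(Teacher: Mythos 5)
Your proposal is correct and follows essentially the same route as the paper: reduction to elementary symbols via Lemma \ref{lem:decom} (with the $S^0_{1,0}$ remainder handled by Theorem \ref{thm:s10}), the commutator decomposition $T=\int_0^1 T_t\,\frac{dt}{t}$, a covering by balls of radius $t^{1/m}$ with subtraction of averages so that the constant part of the commutator cancels, the Lipschitz-type bound \eqref{eq:lip} coming from Assumption \ref{ass:poincare} to gain the factor $t^{(1-\delta)/m}$, a Schur summation over the covering, and integration in $t$ using $\delta<1$. The only cosmetic difference is that you subtract the average of $\gamma_t$ over the output ball $Q_l^t$ while the paper subtracts it over the source ball $Q_k^t$; both choices lead to the same estimate.
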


\begin{proof}
First, using Lemma \ref{lem:decom} and Theorem \ref{thm:s10}, it suffices to prove this current proposition for elementary symbols. So let us just consider a symbol of the following form:
$$ \sigma(x,\xi) = \int_0^1 \gamma_{t}(x) \psi_{t}(\xi) \frac{dt}{t}.$$
So 
$$ \overline{\sigma}(x,\xi) = \int_0^1 \overline{\gamma_{t}(x) \psi_{t}(\xi)} \frac{dt}{t},$$
and let
$$ T:= [\sigma(x,L)]^*- \overline{\sigma}(x,L) = \int_0^1 \overline{\psi_t}(L)[\overline{\gamma_{t}} \cdot ]  - \overline{\gamma_{t}}(x) \overline{\psi_{t}}(L) \frac{dt}{t}.$$
For every $t\in(0,1)$, let 
$$ T_t(f):=\overline{\psi_t}(L)[\overline{\gamma_{t}} f ]  - \overline{\gamma_{t}}(x)\overline{\psi_{t}}(L)f.$$
We are looking for  $L^p-L^p$ off-diagonal estimates at the scale $r:=t^{\frac{1}{m}}$, so consider an almost-disjoint covering $(Q_l^t)_l$ of $X$ by balls of radius $r$. Then
\begin{align*}
\left\| T_t(f) \right\|_{L^p(Q_l^t)} & \leq \sum_k \left\| \overline{\psi_t}(L)[(\overline{\gamma_{t}}-\aver{Q_k^t}\overline{\gamma_t}) f {\bf 1}_{Q_k^t} ]  \right\|_{L^p(Q_l^t)}+ \left\| \left(\overline{\gamma_{t}}(x)-\aver{Q_k^t}\overline{\gamma_t}\right) \overline{\psi_{t}}(L)[f {\bf 1}_{Q_k^t}]\right\|_{L^p(Q_l^t)} \\
& \lesssim \sum_k \left(1+ \frac{d(Q_l^t, Q_k^t) }{r}\right)^{-M}  d(Q_l^t,Q_k^t) t^{-\frac{\delta}{m}} \left\|  f \right\|_{L^p(Q_k^t)},
\end{align*}
where we used (\ref{eq:lip}).
As a consequence, we deduce that
\begin{align*}
\left\| T_t(f) \right\|_{L^p(Q_l^t)} \lesssim \sum_k \left(1+ \frac{d(Q_l^t, Q_k^t) }{r}\right)^{-M+1} t^{\frac{1-\delta}{m}} \left\|  f \right\|_{L^p(Q_k^t)},
\end{align*}
which yields, by using the homogeneous type of the ambiant space,
$$ \left\| T_t(f) \right\|_{L^p \to L^p} \lesssim t^{\frac{1-\delta}{m}}.$$
This can be integrated for $t\in(0,1)$, which finishes the proof of the $L^p$-boundedness of $T$.
\end{proof}

As a byproduct of Theorem \ref{thm:s1delta} and Proposition \ref{prop:dual}, we obtain

\begin{thm} \label{thm:s1delta-Lp}
Let $p\in(p_0,p_0')$.
Every symbol $\sigma\in S^0_{1,\delta}$ with $\delta<1$ gives rise to a $L^p$-bounded operator, still satisfying off-diagonal estimates at the scale $1$.
\end{thm}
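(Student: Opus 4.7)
The plan is to combine the ingredients already developed: $L^2$-boundedness from Theorem \ref{thm:s1delta}, the dual-symbol identity from Proposition \ref{prop:dual}, and the extrapolation principle of Lemma \ref{lemma:extrapolation}, exploiting the off-diagonal machinery provided by Proposition \ref{prop:can} in combination with the elementary-symbol decomposition of Lemma \ref{lem:decom}. The range $p\in(p_0,p_0')$ is split at $p=2$, treating the two sides separately, and the off-diagonal estimate at scale $1$ is then recovered by re-running the argument of Theorem \ref{thm:s10} at the $L^p$ level.

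First, I handle the range $p\in(p_0,2]$. By Theorem \ref{thm:s1delta}, $T:=\sigma(x,L)$ is $L^2$-bounded. To apply Lemma \ref{lemma:extrapolation}, I must check that $T\tilde{\psi}_t(L)$ satisfies $L^2$–$L^2$ off-diagonal estimates at scale $t^{1/m}$ for a spectral multiplier $\tilde\psi_t$ of the required form. Write $\sigma=\tau+\sum_{l\in\Z}\sigma_l$ as in Lemma \ref{lem:decom}, with $\tau\in S^0_{1,0}$ frequency-localized in $[0,2]$ and each elementary $\sigma_l$ satisfying $\|\sigma_l\|_{S^0_{1,\delta}}\lesssim(1+|l|)^{-M}$. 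For each $\sigma_l$, Proposition \ref{prop:can} gives directly the claimed off-diagonal bounds for $T_{\sigma_l}\tilde{\psi}_t(L)$ with uniform constants in $l$ once the norm $\|\sigma_l\|_{S^0_{1,\delta}}$ is tracked; the rapid decay in $|l|$ then allows summation. For $T_\tau$, since $\tau$ is frequency-localized to $\xi\in[0,2]$ and $\tilde\psi_t(L)$ is essentially localized at frequency $t^{-1}$, the composition $\tau(x,L)\tilde\psi_t(L)$ can be treated by freezing coefficients and applying Lemma \ref{schur-lemma}, exactly as in the proof of Theorem \ref{thm:s10}. Hence the extrapolation Lemma \ref{lemma:extrapolation} yields $L^p$-boundedness of $T$ for $p\in(p_0,2]$.

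Second, I handle the range $p\in[2,p_0')$ by duality. For such $p$, the conjugate exponent satisfies $p'\in(p_0,2]$. Note that $\overline\sigma\in S^0_{1,\delta}$ as well, so the first step already yields $\overline\sigma(x,L)$ bounded on $L^{p'}$. Proposition \ref{prop:dual} gives
\[
[\sigma(x,L)]^*=\overline\sigma(x,L)+R
\]
with $R$ bounded on $L^q$ for every $q\in(p_0,p_0')$, and in particular on $L^{p'}$. Consequently $[\sigma(x,L)]^*$ is bounded on $L^{p'}$, which by duality gives $L^p$-boundedness of $\sigma(x,L)$.

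Finally, the off-diagonal estimates at scale $1$ for $L^p$ follow by essentially repeating the proof of Theorem \ref{thm:s10} with the global $L^p$-boundedness just obtained. For two balls $B_1,B_2$ of radius $1$, the case $d(B_1,B_2)\leq 10$ follows from $L^p$-boundedness; for $d(B_1,B_2)>10$, decompose $\sigma(x,\xi)=\int_0^1\sigma(x,\xi)\psi(t\xi)\frac{dt}{t}+\sigma(x,\xi)\phi(\xi)$ and combine the scale-$t^{1/m}$ $L^p$ off-diagonal estimates of Proposition \ref{prop:can} (applied to the elementary decomposition, with $\ell$-summation) with the estimate on $\sigma(x,\xi)\phi(\xi)$ coming from Lemma \ref{lemma2}, then integrate in $t$. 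The main technical obstacle is ensuring uniformity of the off-diagonal constants as $l$ varies when the extrapolation lemma is applied elementary-symbol-by-elementary-symbol; this is overcome by the $(1+|l|)^{-M}$ decay of $\|\sigma_l\|_{S^0_{1,\delta}}$ from Lemma \ref{lem:decom}, which allows every $l$-indexed bound produced by Proposition \ref{prop:can} to be summed absolutely.
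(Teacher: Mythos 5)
Your proposal is correct and follows essentially the same route as the paper: $L^2$-boundedness from Theorem \ref{thm:s1delta}, reduction via Lemma \ref{lem:decom} (with the $S^0_{1,0}$ part handled by Theorem \ref{thm:s10}), then Proposition \ref{prop:can} feeding the extrapolation Lemma \ref{lemma:extrapolation} for $p\in(p_0,2]$, and duality via Proposition \ref{prop:dual} for $p\in(2,p_0')$. You merely spell out in more detail the $l$-summation and the duality identity $[\sigma(x,L)]^*=\overline{\sigma}(x,L)+R$ that the paper leaves implicit.
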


\begin{proof}
For such a symbol $\sigma$, Theorem \ref{thm:s1delta} implies that $T_\sigma$ is $L^2$-bounded. Then, removing the $S^0_{1,0}$-part 
in the decomposition due to Lemma \ref{lem:decom} and Theorem \ref{thm:s10}, we may assume that $\sigma$ is an elementary symbol. 
Since $T_\sigma$ is bounded on $L^2$, because of Proposition \ref{prop:can} we may apply Lemma \ref{lemma:extrapolation} which gives that $T_\sigma$ is bounded on $L^p$ for every $p\in(p_0,2]$. For $p\in (2,p_0')$, we conclude by duality with Proposition \ref{prop:dual}.
\end{proof}

As for Corollaries \ref{cor1} and \ref{cor1-bis}, we have the following consequence:

\begin{cor} \label{cor2} 
Let $p\in(p_0,p_0')$ and a symbol $\sigma\in S^m_{1,\delta}$ for $\delta\in(0,1)$ and $m\geq 0$. Then for $s>0$ and under Assumptions \ref{ass:sobolev} and \ref{ass:poincare}, $\sigma(x,L)$ is bounded from $ W^{s+m,p}_{L}$ to $W^{s,p}_{\Delta}$.
\end{cor}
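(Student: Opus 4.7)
The plan is to follow the reduction used in Corollaries \ref{cor1} and \ref{cor1-bis}. Denote by $m_L$ the order of $L$ (fixed in Section 2), so that $\|g\|_{W^{s+m,p}_L} = \|(1+L)^{(s+m)/m_L} g\|_{L^p}$ and $\|g\|_{W^{s,p}_\Delta} = \|(1+\Delta)^{s/2} g\|_{L^p}$. The claimed continuity
$$\sigma(x,L) : W^{s+m,p}_L \longrightarrow W^{s,p}_\Delta$$
is then equivalent to the $L^p$-boundedness of the composition
$$T := (1+\Delta)^{s/2}\, \sigma(x,L)\, (1+L)^{-(s+m)/m_L}.$$

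I would next rewrite $T = T_\tau$ as the pseudodifferential operator associated to the modified symbol
$$\tau(x,\xi) := (1+\Delta)^{s/2}\sigma(x,\xi)\,(1+\xi)^{-(s+m)/m_L},$$
exactly as in the proof of Corollary \ref{cor1-bis}. Using that $\Delta_x^\alpha$ commutes with $(1+\Delta)^{s/2}$, the Leibniz rule in $\xi$, the symbolic estimates $|\Delta^\alpha \partial_\xi^\beta \sigma(x,\xi)| \lesssim (1+|\xi|)^{m/m_L - \beta + (2\delta/m_L)\alpha}$ provided by $\sigma \in S^m_{1,\delta}$, and the elementary bound $|\partial_\xi^{\beta'}(1+\xi)^{-(s+m)/m_L}| \lesssim (1+|\xi|)^{-(s+m)/m_L - \beta'}$, one obtains
$$|\Delta^\alpha \partial_\xi^\beta \tau(x,\xi)| \lesssim (1+|\xi|)^{-s(1-\delta)/m_L - \beta + (2\delta/m_L)\alpha} \lesssim (1+|\xi|)^{-\beta + (2\delta/m_L)\alpha},$$
where the last inequality uses $s > 0$ and $\delta < 1$. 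Hence $\tau \in S^0_{1,\delta}$, and Theorem \ref{thm:s1delta-Lp} yields the $L^p$-boundedness of $T_\tau = T$ for every $p \in (p_0,p_0')$.

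The main delicate point, already swept under the rug in the proofs of Corollaries \ref{cor1} and \ref{cor1-bis}, is the rigorous justification of the identification $T = T_\tau$, and in particular the pointwise control of the non-integer power $(1+\Delta)^{s/2}$ acting on the $x$-variable of $\sigma(x,\xi)$. This is handled by combining Assumption \ref{ass:sobolev} with the fact that the symbolic estimates for $\sigma \in S^m_{1,\delta}$ hold for all (half-)integer $\Delta$-derivatives, which provides enough regularity in $x$ to make sense of $(1+\Delta)^{s/2}\sigma(\cdot,\xi)$ with the pointwise control used above.
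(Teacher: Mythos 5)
Your proposal is correct and follows essentially the same route the paper intends: the paper gives no separate proof of Corollary \ref{cor2} but refers to the conjugation argument of Corollaries \ref{cor1} and \ref{cor1-bis}, i.e.\ reducing to the $L^p$-boundedness of $T_\tau$ with $\tau(x,\xi)=(1+\Delta)^{s/2}\sigma(x,\xi)(1+\xi)^{-(s+m)/m_L}$ and invoking Theorem \ref{thm:s1delta-Lp}. Your explicit computation that the loss $(1+|\xi|)^{s\delta/m_L}$ from $(1+\Delta)^{s/2}$ is absorbed by the gain $(1+|\xi|)^{-s/m_L}$ (using $\delta<1$), so that $\tau\in S^0_{1,\delta}$, is exactly the verification the paper leaves to the reader, and your caveat about rigorously identifying $T$ with $T_\tau$ matches the level of detail of the paper's own argument.
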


\section{The limit case $S^0_{1,1}$}

For the study of operators coming from a symbol, that belongs to the limit class $S^0_{1,1}$, we will restrict our attention to a more specific framework. We assume that $X$ is a Riemannian manifold, equipped with a metric $d$ and a doubling measure $\mu$. 
For a symbol $\sigma(x,\xi)=a(x) b(\xi)$ belonging to a class of type $S^0_{1,1}$, by definition the smoothness of $a$ and the smoothness of $b(L)$ (described by the decay at infinity of $b$) are of the same importance. This is specific to this case, and this brings the difficulty for the study of such symbols.

Consequently, it is quite natural to assume that the regularity in $x$ will be measured by the same operator $L$ \footnote{It is probably possible to extend the next results in a more general framework with different operators $H$, $L$ with some commutativity assumptions. Here, we prefer to focus on this simpler situation for convenience.}. So we deal with the same operator $L=H=\Delta$ given by the Laplace-Beltrami operator, coming from the Riemannian structure. \\
Moreover, we will assume that this operator is a sub-Laplacian operator $ H=L=\Delta= -\sum_{i=1}^\kappa X_i^2$ (of order $2$), given by a finite collection of vector fields $(X_i)_{i}$. For $I:=(i_1,...,i_p) \in\{1,..,\kappa\}^p$ a sequence of indices, we set $X_I:=X_{i_1} \cdots X_{i_p}$ the composition of $p$ vector fields, which is also an operator of order $p$.
Then, we still require Assumption \ref{ass-Lq-offidag}:

\begin{ass} \label{ass1} We assume that the analytic semigroup $(e^{-tL})_{t>0}$, generated by $-L$, satisfies $L^{p_0}-L^{p_0'}$ off-diagonal estimates (\ref{eq:Lp0}) for some $p_0\in[1,2)$.
\end{ass}

Assumption \ref{ass1} implies full $L^2-L^2$ off-diagonal estimates for the semigroup $(e^{-tL})_{t>0}$ (see e.g. the proof of \cite{AuscherMartell2}, Proposition 3.2 b)): there exists constants $C,c>0$ such that for all open subsets $E,F$ of $X$
\begin{equation} \label{eq:full-DG}
 	\|e^{-tL} \|_{L^{2}(E) \to L^{2}(F)} \leq C  \ e^{-c\left(\frac{d(E,F)^2}{t}\right)}.
\end{equation}

We assume some $L^2-L^2$ estimates for the ``gradients'' of the heat semigroup:

\begin{ass}[$L^{2}-L^{2}$ off-diagonal estimates] \label{ass2} 
For an arbitrary collection $I$ of indices, there exist constants $C,c>0$ such that for arbitrary balls $B_1,B_2$ of radius $r=t^{1/2}>0$
\begin{equation} \label{eq:L2}
 	\| r^{|I|} X_I e^{-r^2 L} \|_{L^{2}(B_1) \to L^{2}(B_2)} \leq C \ e^{-c\left(\frac{d(B_1,B_2)^2}{t}\right)}.
\end{equation}
\end{ass}

Finally, our last assumption is a Poincar\'e type inequality:

\begin{ass} \label{ass3} The manifold $(X,d,\mu)$ satisfies a $L^2$ Poincar\'e inequality: for every $f\in {\mathcal D}(L)$ and every ball $B$ of radius $r>0$,
$$ \left( \aver{B} \left|f- \aver{B} f \right|^2 d\mu \right)^{\frac{1}{2}} \lesssim r \left( \aver{B} |X(f)|^2 d\mu \right)^{\frac{1}{2}}.$$ 
\end{ass}

We can then define the symbols:

\begin{df} We set $S^{0}_{1,1}:=S^{0}_{1,1}(L)$ the set of symbols $\sigma\in C^\infty(X \times (0,\infty))$ such that
 $$ \forall I, \ \forall \beta\geq 0, \qquad \left| \partial_\xi^\beta X_I \sigma(\,.\,,\xi) [x] \right| \lesssim (1+|\xi|)^{-\beta + |I|}.$$
\end{df}

Then, following Lemma \ref{lemma1}, we have this new version taking into account the setting of the sub-Laplacian structure:

\begin{lem} \label{lemma-bis}
Let $\nu >0$ and $N>\nu$, let $r>0$. Take $F$ a smooth function on $(0,\infty)$ with
\begin{equation} \label{m-cond1-bi}
 \left|\partial^\beta_\xi F(\xi)  \right| \lesssim \min(1,(r^2|\xi|)^N) |\xi|^{-\beta}
\end{equation}
for all indices $0 \leq 2 \beta \leq \lfloor \nu \rfloor +1$. 
Then for all set of indices $I$, there exists a constant $C>0$, independent of $r=t^{1/2}>0$, such that for all balls $B_1,B_2$ and all $f \in L^2(X)$ with $\supp f \subseteq B_1$
\begin{align*}
 	\norm{r^{|I|} X_I F(L)f}_{L^2(B_2)} \leq C \left(1+\frac{d(B_1,B_2)}{r}\right)^{-\nu} \norm{f}_{L^2(B_1)}.
\end{align*}
\end{lem}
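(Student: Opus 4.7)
The plan is to mirror the proof of Lemma \ref{lemma1}, with the $L^2$-$L^2$ off-diagonal estimates of Assumption \ref{ass2} playing the role that the Davies-Gaffney bound on $e^{-tL}$ plays there. The new ingredient to track is the prefactor $r^{|I|}$ arising from Assumption \ref{ass2} when the vector field $X_I$ is inserted in front of the semigroup.

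I would split the argument into the regimes $d(B_1,B_2)\leq r$ and $d(B_1,B_2)>r$. In the near-diagonal regime it suffices to produce a global bound $\|r^{|I|} X_I F(L)\|_{L^2\to L^2}\lesssim 1$. For this I would perform a dyadic spectral decomposition $F(L)=\sum_{j\in\Z} F_j(L)$, with each $F_j$ supported in $\xi\in[2^{j-1},2^{j+1}]$, and on each piece apply the exponential trick: write
\[
r^{|I|} X_I F_j(L) = \bigl(r^{|I|}X_I e^{-2^{-j} L}\bigr)\,\bigl(e^{2^{-j}L}F_j(L)\bigr),
\]
so that Assumption \ref{ass2} with $t=2^{-j}$ bounds the first factor in operator norm by $\lesssim (2^{j/2}r)^{|I|}$, while the second factor is $L^2$-bounded with norm $\|e^{2^{-j}\xi}F_j(\xi)\|_\infty\lesssim\|F_j\|_\infty\lesssim\min(1,(2^j r^2)^N)$ coming from \eqref{m-cond1-bi}. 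Summing $\sum_j (2^{j/2}r)^{|I|}\min(1,(2^j r^2)^N)$ then uses the hypothesis $N>\nu$.

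For the regime $d(B_1,B_2)>r$ I would follow the proof of Lemma \ref{lemma1}: set $G(\xi):=F(\xi^2)$, decompose $G=\sum_{j\in\Z} G_j$ with $\supp G_j\subseteq[2^{j-1},2^{j+1}]$, and reprove the analogue of \cite{KunstmannUhl}, Lemma 4.10 at $p=2$ with the operator $r^{|I|} X_I$ inserted in front of the multiplier. The proof is essentially unchanged, with Assumption \ref{ass2} substituted for the Davies-Gaffney estimate at each appearance. This yields
\[
\|r^{|I|}X_I G_j(\sqrt{L})f\|_{L^2(B_2)} \lesssim (2^{j/2}r)^{|I|}\,\frac{2^{-j\nu}}{d(B_1,B_2)^\nu}\,\|\delta_{2^{j+1}}G_j\|_{W^\infty_{\nu+\epsilon}}\,\|f\|_{L^2(B_1)}.
\]
Combining with the Mikhlin-type bound $\|\delta_{2^{j+1}}G_j\|_{W^\infty_{\nu+\epsilon}}\lesssim\min(1,(2^j r^2)^N)$ from \eqref{m-cond1-bi} and summing over $j$, split into $2^{j/2}r\leq 1$ and $2^{j/2}r>1$ exactly as in Lemma \ref{lemma1}, yields the desired factor $(1+d(B_1,B_2)/r)^{-\nu}$.

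The main obstacle is the new factor $(2^{j/2}r)^{|I|}$ introduced by Assumption \ref{ass2}, which grows with $j$ and is absent in the proof of Lemma \ref{lemma1}. At low frequencies ($2^{j/2}r\leq 1$) it is dominated by $(2^j r^2)^N$ thanks to the hypothesis $N>\nu$; at high frequencies it has to be absorbed by the polynomial decay $2^{-j\nu}$ (or, in the near-diagonal case, by the sup-bound on $F_j$). Balancing these two contributions in a way that keeps the final constant independent of $r$ is the delicate technical point, and it is precisely what forces the strengthening of the hypothesis from $N>\nu/m$ in Lemma \ref{lemma1} to $N>\nu$ in the present statement.
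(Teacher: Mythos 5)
Your strategy is essentially the paper's: the one new ingredient in both arguments is the factorization $r^{|I|}X_I e^{-zL}=\bigl(r^{|I|}X_I e^{-\Re z\,L/2}\bigr)\bigl(e^{-(z-\Re z/2)L}\bigr)$, with Assumption \ref{ass2} absorbing the vector fields, fed into the Kunstmann--Uhl spectral multiplier machinery. The paper does this once, at the level of complex-time off-diagonal bounds for $r^{|I|}X_Ie^{-(r^2+it)L}$, and then invokes the Fourier-inversion representation of $F$; you unfold one more layer and re-run the dyadic decomposition of Lemma \ref{lemma1}. These are the same argument in different clothing.

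There is, however, a step that fails as written: the high-frequency half of your sums. The price of inserting $X_I$ on the dyadic block where the spectrum of $L$ sits near $2^j$ is a factor $(2^{j/2}r)^{|I|}$ (and $(2^jr)^{|I|}$, not $(2^{j/2}r)^{|I|}$, in your second part, where the decomposition is in $\sqrt{L}$). For $2^jr^2\ge 1$ hypothesis \eqref{m-cond1-bi} gives only $\|F_j\|_\infty\lesssim 1$, since $\min(1,(r^2\xi)^N)=1$ there; the exponent $N$ is irrelevant at high frequency. Hence $\sum_{j:\,2^jr^2\ge1}(2^{j/2}r)^{|I|}\|F_j\|_\infty$ diverges for every $|I|\ge1$, and in the off-diagonal regime the factor $(2^jr)^{|I|}$ overwhelms the decay $2^{-j(\nu-2N')}$ unless $\nu>|I|$, which the statement does not assume. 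Your closing claim that this balancing is ``precisely what forces $N>\nu$'' is therefore not correct: $N$ governs only the low-frequency tail. The divergence reflects a real limitation of the statement --- if $F\equiv1$ above the scale $r^{-2}$, then $r^{|I|}X_IF(L)$ acts like $(r^2L)^{|I|/2}$ on high frequencies and is not $L^2$-bounded --- so to close the proof one must use (or add) decay of $F$ at infinity, say $|F(\xi)|\lesssim(r^2\xi)^{-N''}$ for $r^2\xi\ge1$ with $N''$ large compared to $|I|$, or restrict to band-limited $F$ as in every application the paper actually makes (there $F=\psi_s\Phi(t\cdot)$ is supported in a single dyadic band). The paper's own sketch hides the same issue behind the phrase ``the smoothness of $F$ allows us to compensate for the weight,'' so you are in good company, but your summation does not close under the stated hypotheses.
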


\begin{proof}
The proof is very similar to the one used for Lemma \ref{lemma1}, with using the following off-diagonal estimates: ($z=r^2+it$)
\begin{equation} \| r^{|I|}X_I e^{-(r^2+it)L} \|_{L^2(B_2)} \lesssim \exp\left(-c\frac{d(B_1,B_2)^2}{r^2}\right) \left(1+ \frac{t}{r}\right)^{\frac{\nu}{2}} \norm{f}_{L^2(B_1)}. \label{eq:e} \end{equation}
The considered operator can be split as follows:
 $$ r^{|I|}X_I e^{-(r^2+it)L} = \left(r^{|I|}X_I e^{-\frac{r^2}{2} L}\right) \left(e^{-(\frac{r^2}{2}+it)L}\right). $$
The first operator $\left(r^{|I|}X_I e^{-\frac{r^2}{2} L}\right)$ satisfies $L^2-L^2$ off-diagonal estimates due to Assumption \ref{ass2} and the second operator $\left(e^{-(\frac{r^2}{2}+it)L}\right)$ (already used in Lemma \ref{lemma1}) satisfies the following off-diagonal estimates (see \cite[Lemma 4.9]{KunstmannUhl}) 
$$ \norm{e^{-(\frac{r^2}{2}+it)L}}_{L^2(B_2)} \lesssim \left(1+\frac{d(B_1,B_2)}{r}\right)^{-\nu} \left(1+ \frac{t}{r}\right)^{\frac{\nu}{2}}\norm{f}_{L^2(B_1)}.$$
Computing the composition of these two off-diagonal estimates, we obtain (\ref{eq:e}).
Then as explained in \cite{KunstmannUhl}, we expand $F$ with its Fourier transform. The smoothness of $F$ allows us to compensate for the weight $\left(1+ \frac{t}{r}\right)^{\frac{\nu}{2}}$ and we may obtain the stated result from (\ref{eq:e}).
\end{proof}

Then, it remains us to define the space $\BMO_L$ associated with the sub-Laplacian operator (see \cite{DY,HLMMY} and the references given therein for more details):

\begin{df} For $\eps>0$, $M \in \N$ and $\phi \in {\mathcal R}(L^M) \subseteq L^2(X)$ (i.e. there exists $b\in L^2 \cap {\mathcal D} (L^M)$ with $\phi=L^M(b) \in L^2$), we introduce the norm
$$ \norm{\phi}_{*,M} := \sup_{j \geq 0} \left[2^{j\eps} \mu(2^j B_0)^{1/2} \sum_{k=0}^M \|L^k b\|_{L^2(S_j(B_0))}\right], $$
where $B_0$ is a fixed ball with radius 1. Then
\begin{align} \label{def-molMeps}
 	{\mathcal M}_{M}:=\{\phi \in {\mathcal R}(L^M) \,:\, \norm{\phi}_{*,M} < \infty\}.
\end{align}
\end{df}

Then for every $f \in ({\mathcal M}_{M})'$ and every $t>0$, one can define $(I-e^{-tL})^M f$ and $(I-(I+tL)^{-1})^Mf$ via duality as elements of $L^2_{\textrm{loc}}(X)$.

\begin{df}[$\BMO_L$ space]
 Let  $M \in \N$. An element $f \in {\mathcal M}_M$ is said to belong to $\BMO_{L,M}(X)$ if
\begin{equation} \label{defBMO-1}
 	\norm{f}_{\BMO_{L,M}(X)} := 
		\sup_{B \subseteq X} \left(\aver{B} \left|(I-e^{-r_B^{2}L})^M f \right|^2 d\mu \right)^{1/2} < \infty,
\end{equation}
where the supremum is taken over all balls $B$ in $X$.
Then it is known that such spaces do not depend on $M$, as soon as $M>\frac{n}{4}$ (see \cite{HLMMY}), and we set $\BMO_L:=\BMO_{L,M}$. 
\end{df}

From the $L^2-L^2$ off-diagonal estimates on the semigroup, it is easy to check that $L^\infty \subset \BMO_L$.

\begin{thm} Let us suppose Assumptions \ref{ass1}, \ref{ass2} and \ref{ass3}. Let a symbol $\sigma \in S^0_{1,1}$. Then $T_\sigma:=\sigma(x,L)$ is $L^2$-bounded if and only if $T_\sigma^*(1) \in \BMO_{L}$. 
\end{thm}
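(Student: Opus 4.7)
The strategy is to apply the $T(1)$-theorem adapted to $L$ from \cite{B,FreyKunstmann}: for a linear operator $T$ such that the family $T\tilde\psi_t(L)$ satisfies $L^2-L^2$ off-diagonal estimates at scale $t^{1/m}$ (for a bump $\tilde\psi_t$ adapted to $t^{-1}$), one has $T$ bounded on $L^2$ if and only if $T^*(1) \in \BMO_L$, where $T^*(1)$ is interpreted by duality against molecules of $\BMO_L$. Both directions of the theorem follow at once once this off-diagonal hypothesis is verified for $T_\sigma$.

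To verify this hypothesis, first decompose $\sigma = \tau + \sum_{l \in \Z} \sigma_l$ via Lemma \ref{lem:decom}, with $\tau \in S^0_{1,0}$ and $\sigma_l$ elementary satisfying $\|\sigma_l\|_{S^0_{1,1}} \lesssim (1+|l|)^{-M}$ for any $M$. Theorem \ref{thm:s10} handles $T_\tau$ (and gives $T_\tau^*(1) \in L^\infty \subset \BMO_L$), so we may reduce to the elementary case. Proposition \ref{prop:can} then furnishes $L^2-L^2$ off-diagonal estimates of arbitrary polynomial order for $T_{\sigma_l}\tilde\psi_t(L)$; the fast decay in $l$ allows these to be summed, yielding the required off-diagonal property for $T_\sigma\tilde\psi_t(L)$. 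The $T(1)$-theorem of \cite{FreyKunstmann} then concludes both directions. In particular, for the necessity direction the argument reduces to splitting $\mathbf{1} = \sum_j \mathbf{1}_{S_j(B)}$ along the annuli from \eqref{annuli} and applying the off-diagonal decay of $T_\sigma(I-e^{-r_B^2 L})^M$ — itself a consequence of Proposition \ref{prop:can} applied with the bump $\tilde\psi_t(\xi) = (1-e^{-t\xi})^M$.

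The main obstacle lies in the rigorous interpretation of $T_\sigma^*(1)$: since $T_\sigma^*$ is a priori defined only on $L^2$, the symbol $T_\sigma^*(1)$ must be understood as an element of the dual of the molecule space ${\mathcal M}_M$, and one must justify the algebraic identity $(I-e^{-r_B^2L})^M T_\sigma^*(1) = \bigl[T_\sigma(I-e^{-r_B^2L})^M\bigr]^*(1)$ that bridges the $\BMO_L$ condition with the off-diagonal framework. A secondary technical point is that the parameters $\nu, N, \eps_1, \eps_2$ in Proposition \ref{prop:can} must be chosen large enough to match the quantitative hypotheses of the $T(1)$-theorem of \cite{FreyKunstmann}; this is enabled by the arbitrary smoothness of elements of $S^0_{1,1}$ combined with Assumptions \ref{ass1}-\ref{ass3}, where in particular Assumption \ref{ass2} is what propagates the gradient structure $X_I$ (used in the definition of $S^0_{1,1}$) through the freezing-coefficient step implicit in Lemma \ref{lemma-bis}.
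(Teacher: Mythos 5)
There is a genuine gap, and it sits exactly where the $S^0_{1,1}$ difficulty is concentrated. The $T(1)$-theorem of \cite{FreyKunstmann} that the paper invokes does not take as hypothesis one-sided off-diagonal estimates for $T\tilde\psi_t(L)$; it requires the two-sided conditions \eqref{eq:check}, namely off-diagonal bounds for \emph{both} $\Psi(tL)\,T_\sigma\,\Phi(tL)$ and $\Psi(tL)\,[T_\sigma]^*\,\Phi(tL)$, where $\Psi$ carries the cancellation $(tL)^M$ and $\Phi(tL)=e^{-tL}$ does not. Your argument, via Proposition \ref{prop:can} applied to $T_{\sigma_l}\tilde\psi_t(L)$, only yields the second of these (by duality it controls $\Phi(tL)T_\sigma\Psi(tL)$, i.e.\ $T_\sigma$ with the cancellative bump on the \emph{right}). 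The first condition, with the cancellative factor on the \emph{left} of $T_\sigma$ and only the plain semigroup on the right, cannot be reduced to Proposition \ref{prop:can}: for an elementary symbol $\int_0^1\gamma_s(\cdot)\psi_s(L)\,\frac{ds}{s}$ one must push the derivatives hidden in $(tL)^M$ past the multiplication by $\gamma_s$, and in the $S^0_{1,1}$ class the bounds $\|X_J\gamma_s\|_\infty\lesssim s^{-|J|}$ give no decay by themselves. This is precisely why the paper restricts to a sub-Laplacian on a Riemannian manifold: it writes $L^M$ as a sum of $X_I$, applies the Leibniz rule $X_I(fg)=\sum_{I_1\sqcup I_2=I}X_{I_1}(f)X_{I_2}(g)$ to distribute derivatives between $\gamma_s$ and $\psi_s(L)\Phi(tL)f$, and then invokes Assumption \ref{ass2} and Lemma \ref{lemma-bis} to control $t^{|I_2|/2}X_{I_2}\psi_s(L)\Phi(tL)$; the gain $(t/s)^{|I_1|/2+|I_2|/2}$ against the loss $\min(s/t,1)^N$ is what makes the $s$-integral converge. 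If your one-sided hypothesis were sufficient, the theorem would hold in the general metric-measure setting of Sections 4--5 with no need for the Leibniz rule or Assumption \ref{ass2} at all, which is not the case.

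Two secondary points. First, your proposed bump $\tilde\psi_t(\xi)=(1-e^{-t\xi})^M$ does not satisfy the decay at infinity required in \eqref{schur-lemma-cond2}, so Proposition \ref{prop:can} does not apply to it directly (the passage to such functions is the content of the extrapolation discussion around Lemma \ref{lemma:extrapolation}, not of Proposition \ref{prop:can}). Second, your concern about the meaning of $T_\sigma^*(1)$ is legitimate but is already packaged inside the framework of \cite{FreyKunstmann} via the molecule spaces ${\mathcal M}_M$; it is not where the work lies. You should also record, as the paper does, that $T_\sigma(1)=\sigma(\cdot,0)\in L^\infty\subset\BMO_L$ because $L(1)=0$ for a sub-Laplacian, since the $T(1)$-theorem needs a condition on $T_\sigma(1)$ as well.
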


As we explained in Lemma \ref{lemma:extrapolation}, one can show via extrapolation that such operators are also bounded on $L^p$ for $p\in(p_0,2)$ and in Sobolev spaces (as in Corollary \ref{cor1}).

\begin{proof}

The main idea is to apply the $T(1)$-theorem, adapted to the semigroup $(e^{-tL})_{t>0}$, proved in \cite[Theorem 4.16]{FreyKunstmann}. Let $\Psi(tL):=(tL)^{M} e^{-tL}$ (for a large enough integer $M$) and $\Phi(tL)=e^{-tL}$. Since $L$ is a sub-Laplacian operator, we have $L(1)=0$. Hence for $\sigma \in S^0_{1,1}$, we have $T_\sigma(1)=\sigma(\cdot,0) \in L^\infty \subset \BMO_L$.

So our result will be a consequence of \cite[Theorem 4.16]{FreyKunstmann}, as soon as we have checked the following estimates: for every $t>0$ every balls $B_1,B_2$ of radius $r:=t^{\frac{1}{2}}$
\begin{equation}
  \| \Psi(tL) T_{\sigma} \Phi(tL) \|_{L^2(B_1) \to L^2(B_2)} +  \| \Psi(tL) [T_{\sigma}]^* \Phi(tL) \|_{L^2(B_1) \to L^2(B_2)} \lesssim \left(1+\frac{d(B_1,B_2)}{r} \right)^{-N},
\label{eq:check}
\end{equation}
with $N$ sufficiently large.

\medskip
{\bf Step 1 :} Estimates for $\Psi(tL) T_{\sigma} \Phi(tL)$. 

Since we may expand $L^M$ in a finite sum of vector fields $X_I=X_{i_1}...X_{i_{2M}}$ of order $2M$, let us just consider one of them and so we have to estimate $ \Phi(tL) (t^MX_I) T_{\sigma} \Phi(tL)$. \\
Due to Lemma \ref{lem:decom}, we may reduce the problem to elementary operators. So assume that
$$ \sigma(x,\xi) = \int_0^1 \gamma_{s}(x) \psi_{s}(\xi) \frac{ds}{s}$$
with $\psi_{s}$ supported around $s^{-1}$ and regular at this scale and $\gamma_s$ satisfying
$$ \|X_J \gamma_s \|_{L^\infty} \lesssim s^{-|J|}.$$
First we have
\begin{align*}
  \Phi(tL) (t^M X_I) T_{\sigma} \Phi(tL)(f) & = \int_0^1 \Phi(tL) (t^M X_I)\left[\gamma_s(\cdot) \psi_s(L) \Phi(tL)(f) \right] \frac{ds}{s}  \\
  & = \sum_{I_1\sqcup I_2=I} \int_0^1 \Phi(tL)\left[ (t^{|I_1|/2} X_{I_1} \gamma_s) (t^{|I_2|/2} X_{I_2} \psi_s(L) \Phi(tL)(f))\right] \frac{ds}{s}
\end{align*}
where we used the Leibniz rule
$$ X_I(fg) = \sum_{I_1\sqcup I_2=I} X_{I_1}(f) X_{I_2}(g).$$
So choose now a bounded covering  $(Q_i)_i$ of balls with radius $\sqrt{t}$, fix $s\in(0,1)$ and a covering $I_1\sqcup I_2=I$. We have 
\begin{align*}
 \left\| \Phi(tL)\left[ (t^{|I_1|/2} X_{I_1} \gamma_s) (t^{|I_2|/2} X_{I_2} \psi_s(L) \Phi(tL)(f))\right] \right\|_{L^2(B_2)} & \\
 & \hspace{-8cm} \lesssim \sum_{i} \left(1+\frac{d(B_2,Q_i)}{\sqrt{t}}\right)^{-M} \left\|\left[ (t^{|I_1|/2} X_{I_1} \gamma_s) (t^{|I_2|/2} X_{I_2} \psi_s(L) \Phi(tL)(f))\right] \right\|_{L^2(Q_i)} \\
& \hspace{-8cm} \lesssim \sum_{i} \left(1+\frac{d(B_2,Q_i)}{\sqrt{t}}\right)^{-M} \left\| (t^{|I_1|/2} X_{I_1} \gamma_s)\right\|_{L^\infty} \left\|t^{|I_2|/2} X_{I_2} \psi_s(L) \Phi(tL)(f) \right\|_{L^2(Q_i)} \\
& \hspace{-8cm} \lesssim \sum_{i} \left(1+\frac{d(B_2,Q_i)}{\sqrt{t}}\right)^{-M} \left(\frac{t}{s}\right)^{\frac{|I_1|}{2}} \left\|t^{|I_2|/2} X_{I_2} \psi_s(L) \Phi(tL)(f) \right\|_{L^2(Q_i)},
\end{align*}
where we used the regularity of $\gamma_s$. Then $f$ is supposed to be supported on the other ball $B_1$. Since $\psi_s \Phi(t\cdot)$ satisfies
$$ \left|\partial^\alpha_\xi \psi_s(\xi)\Phi(t\xi)  \right| \lesssim {\bf 1}_{s \xi \simeq 1} s^\alpha \min\{ \frac{s}{t},1\}^{N}$$
for every integer $N>1$, we obtain from Lemma \ref{lemma-bis}
$$ \left\|t^{|I_2|/2} X_{I_2} \psi_s(L) \Phi(tL)(f)\right\|_{L^2(Q_i)} \lesssim  \left(\frac{t}{s}\right)^{\frac{|I_2|}{2}} \min\{ \frac{s}{t},1\}^{N} \left(1+\frac{d(B_1,Q_i)}{\sqrt{s}}\right)^{-N} \|f\|_{L^2(B_1)},$$
for every sufficiently large exponent $N\geq 1$. 
Finally, we deduce that for some large integers $M,M',N\geq 1$, we have
\begin{align*}
 \lefteqn{ \left\| \Phi(tL)\left[ (t^{|I_1|/2} X_{I_1} \gamma_s) (t^{|I_2|/2} X_{I_2} \psi_s(L) \Phi(tL)(f))\right]\right\|_{L^2(B_2)}} &  & \\
 & & \lesssim \sum_{i} \left(1+\frac{d(B_2,Q_i)}{\sqrt{t}}\right)^{-M} \left(\frac{t}{s}\right)^{M'}   \min\{ \frac{s}{t},1\}^{N} \left(1+\frac{d(B_1,Q_i)}{\sqrt{s}}\right)^{-N} \|f\|_{L^2(B_1)} \\
 & & \lesssim \left(1+\frac{d(B_2,B_1)}{\max\{\sqrt{t},\sqrt{s}\}}\right)^{-M} \left(\frac{t}{s}\right)^{M}  \min\{ \frac{s}{t},1\}^{N} \|f\|_{L^2(B_1)}.
\end{align*}
 Integrating this inequality along $s\in(0,t)$ and $s\in(t,1)$, for a good choice of $M,M',N$ according to the two situations, we obtain
\begin{align*}
 \lefteqn{\left\| \Phi(tL)\left[ (t^{|I_1|/2} X_{I_1} \gamma_s) (t^{|I_2|/2} X_{I_2} \psi_s(L) \Phi(tL)(f))\right] \right\|_{L^2(B_2)}} &  & \\
  & & \lesssim \left(1+\frac{d(B_2,B_1)}{\sqrt{t}}\right)^{-M} \|f\|_{L^2(B_1)},
\end{align*}
which corresponds to the first part of (\ref{eq:check}).

\mb
{\bf Step 2 :} Estimates for $\Psi(tL) [T_{\sigma}]^* \Phi(tL)$.

By duality, $L^2-L^2$ off-diagonal estimates for $\Psi(tL) [T_{\sigma}]^* \Phi(tL)$ are equivalent to $L^2-L^2$ off-diagonal estimates for $\Phi(tL) T_{\sigma} \Psi(tL)$.
In Proposition \ref{prop:can}, we have already seen that $T_{\sigma} \Psi(tL)$ satisfies such estimates, and so in particular does $\Phi(tL) T_{\sigma} \Psi(tL)$.

We can then apply the $T(1)$-theorem and conclude the proof.
\end{proof}

As an application, it is known that $S^0_{1,1}$ pseudodifferential operators appear when we use the paraproduct to ``linearize'' a nonlinear PDE (this was Bony's motivation to introduce his paraproduct in \cite{Bony}). We may adapt this approach in our current context: we denote $\Psi(tL):=(tL)^{M} e^{-tL}$ (for a large enough integer $M$) and $\Phi(tL)=e^{-tL}$. We can then define paraproducts of the form
$$ (f,g) \rightarrow \Pi_g(f) := \int_0^1 \phi(tL)(g) \psi(tL)(f) \frac{dt}{t}.$$
Such paraproducts are very similar to those introduced in \cite{B,BS,Frey}, but not identical. One can then check that $\Pi_g(f) = T_\sigma(f)$ is a pseudodifferential operator associated to the symbol
$$ \sigma(x,\xi) = \int_0^1 \Phi(tL)(g) (x) (t\xi)^{M} e^{-t\xi} \frac{dt}{t}.$$
If $p_0=1$, that means that the semigroup satisfies pointwise estimates then for $g\in L^\infty$, $\Phi(tL)(g)$ is uniformly bounded and then $\sigma \in S^0_{1,1}$. We also deduce that the paraproduct $\Pi_g$ is bounded on $L^p$ as soon as $g\in L^\infty$, as obtained in \cite{B,Frey} for the other paraproducts.

The study of such paraproducts, given by a semigroup of operators, is just beginning. So it is not clear today which definition (this one or the one in \cite{B,Frey}) is the best.

Here the novelty is that such operators fit into the class of pseudodifferential operators. So as in the Euclidean situation: we hope to define a suitable functional calculus in order to ``invert'' the paraproduct $\Pi_g$ locally around points where $g(x)\neq 0$.

\small{

}

\end{document}